\documentclass[a4paper,11pt,reqno,noindent]{amsart}
\usepackage[centertags]{amsmath}
\usepackage{amsfonts,amssymb,amsthm,dsfont,cases,amscd,esint,enumerate}
\usepackage[T1]{fontenc}
\usepackage[english]{babel}
\usepackage[applemac]{inputenc}
\usepackage{newlfont}
\usepackage{color}
\usepackage[body={15cm,21.5cm},centering]{geometry} 
\usepackage{fancyhdr}
\pagestyle{fancy}
\fancyhf{}

\fancyhead[RO,LE]{\footnotesize\thepage}
\fancyhead[LO]{\scriptsize\rightmark}
\fancyhead[RE]{\scriptsize\leftmark}

\setlength{\headheight}{12pt}  
\setlength{\headsep}{25pt} 
\usepackage{enumerate}

\theoremstyle{plain}
\newtheorem{theor10}{Theorem}

\newtheorem{theor0}{Theorem}[section]
\newenvironment{theor}
  {\pushQED{\qed}\begin{theor0}}
  {\popQED\end{theor0}}
\newtheorem{lem0}[theor0]{Lemma}
\newenvironment{lem}
  {\pushQED{\qed}\begin{lem0}}
  {\popQED\end{lem0}}
\newtheorem{prop0}[theor0]{Proposition}
\newenvironment{prop}
  {\pushQED{\qed}\begin{prop0}}
  {\popQED\end{prop0}}
\newtheorem{cor0}[theor0]{Corollary}

\theoremstyle{definition}
\newtheorem{rem0}[theor0]{Remark}
\newenvironment{rem}
  {\pushQED{\qed}\begin{rem0}}
  {\popQED\end{rem0}}

\mathchardef\emptyset="001F
\numberwithin{equation}{section}

\newcommand{\N}{\mathbb N}
\newcommand{\e}{\varepsilon}
\newcommand{\calG}{\mathcal{G}}
\newcommand{\calQ}{\mathcal{Q}}
\newcommand{\Lc}{\mathcal{L}}
\newcommand{\calC}{\mathcal{C}}
\newcommand{\Sc}{\mathcal S}
\newcommand{\Hf}{\mathfrak H}
\newcommand{\dist}{\operatorname{dist}}
\newcommand{\R}{\mathbb R}
\newcommand{\calL}{\mathcal L}
\newcommand{\Nc}{\mathcal N}
\newcommand{\loc}{{\operatorname{loc}}}
\newcommand{\Id}{\operatorname{Id}}
\newcommand{\E}{\mathbb{E}}

\newcommand{\ee}{e}
\newcommand{\Aa}{\boldsymbol a}
\newcommand{\Ld}{\operatorname{L}}
\newcommand{\step}[1]{\noindent \textit{Step} #1.}

\newcommand{\Pm}{\mathbb{P}}

\newcommand{\expec}[1]{\mathbb{E}\left[ #1 \right]}

\newcommand{\expecmm}[1]{\mathbb{E}[ #1 ]}

\newcommand{\cov}[2]{\operatorname{Cov}\left[{#1};{#2}\right]}

\usepackage[colorlinks,citecolor=black,urlcolor=black]{hyperref}

\title[Eigenvalue fluctuations for random elliptic operators]{Eigenvalue fluctuations for random elliptic operators in homogenization regime}

\author[M. Duerinckx]{Mitia Duerinckx}
\address[Mitia Duerinckx]{Universit\'e Paris-Saclay, CNRS, Laboratoire de Math\'ematiques d'Orsay, 91400~Orsay, France
\& University of California, Los Angeles, Department of Mathematics, CA~90095, USA
\& Universit\'e Libre de Bruxelles, D\'epartement de Math\'ematique,
1050~Brussels, Belgium}
\email{mitia.duerinckx@u-psud.fr}

\begin{document}
\selectlanguage{english}
\begin{abstract}
This work is devoted to the asymptotic behavior of eigenvalues of an elliptic operator with rapidly oscillating random coefficients on a bounded domain with Dirichlet boundary conditions. A sharp convergence rate is obtained for isolated eigenvalues towards eigenvalues of the homogenized problem, as well as a quantitative two-scale expansion result for eigenfunctions. Next, a quantitative central limit theorem is established for eigenvalue fluctuations; more precisely, a pathwise characterization of eigenvalue fluctuations is obtained in terms of the so-called homogenization commutator, in parallel with the recent fluctuation theory for the solution operator.
\end{abstract}

\maketitle

\section{Introduction}
Let $\Aa$ be a stationary and ergodic random coefficient field on $\R^d$ with symmetric values in $\R^{d\times d}$, with the following boundedness and uniform ellipticity properties, for some deterministic constant $\nu>0$,
\begin{equation}\label{eq:ell-as}
|\Aa(x)\ee|\le|\ee|,\qquad \ee\cdot\Aa(x)\ee\ge\nu|\ee|^2,\qquad\text{almost surely, \quad for all $x,\ee\in\R^d$},
\end{equation}
and denote by $(\Omega,\Pm)$ the underlying probability space. In the sequel, we further assume that $\Aa$ satisfies some strong mixing condition, and our main results focus for simplicity on a Gaussian model, see Section~\ref{sec:as}.
Given a bounded $C^{1,1}$-domain $U\subset\R^d$, we consider the sequence of rescaled operators $-\nabla\cdot\Aa(\tfrac\cdot\e)\nabla$ on $H^1_0(U)$. We consider their eigenvalues $\{\lambda_\e^k\}_{k\ge1}$, listed in increasing order and repeated according to multiplicity, and we choose corresponding orthonormal eigenfunctions $\{g_\e^k\}_{k\ge1}\subset H^1_0(U)$,
\begin{equation}\label{eq:eigen-eps}
-\nabla\cdot\Aa(\tfrac\cdot\e)\nabla g_\e^k\,=\,\lambda_\e^kg_\e^k\quad\text{in $U$},\qquad\|g_\e^k\|_{\Ld^2(U)}=1.
\end{equation}
As is well-known, see e.g.~\cite[Section~11]{JKO94}, the eigenvalues $\{\lambda_\e^k\}_{k\ge1}$ converge almost surely to the corresponding eigenvalues $\{\bar\lambda^k\}_{k\ge1}$ of the homogenized operator $-\nabla\cdot\bar\Aa\nabla$,
\begin{equation}\label{eq:eigen-bar}
-\nabla\cdot\bar\Aa\nabla\bar g^k\,=\,\bar\lambda^k\bar g^k\quad\text{in $U$},\qquad\|\bar g^k\|_{\Ld^2(U)}=1,
\end{equation}
where the effective coefficient $\bar\Aa\in\R^{d\times d}$ is defined in each direction $\ee_\alpha$, $1\le\alpha\le d$, by
\[\bar\Aa\ee_\alpha=\expec{\Aa(\nabla\varphi_\alpha+\ee_\alpha)},\]
in terms of the so-called corrector gradient $\nabla\varphi_\alpha$, which is defined as the unique almost sure gradient solution in $\Ld^2_\loc(\R^d)^d$ of the corrector equation
\[-\nabla\cdot\Aa(\nabla\varphi_\alpha+\ee_\alpha)=0,\qquad\text{in $\R^d$},\]
such that $\nabla\varphi_\alpha$ is a stationary field with vanishing expectation and bounded second moment.
In addition, in case of simple eigenvalues, normalized eigenfunctions are also known to converge weakly in $H^1_0(U)$ to the corresponding eigenfunctions of the homogenized operator.
In the present contribution, we further establish sharp convergence rates and analyze random fluctuations.
More precisely, our results are twofold:
\begin{enumerate}[(i)]
\item We prove an optimal convergence rate for simple eigenvalues and give a two-scale description of eigenfunctions. Such results were only known previously in the simpler periodic setting~\cite{Kesavan-1,Kesavan-2}.
\smallskip\item We characterize joint fluctuations of simple eigenvalues in form of a quantitative central limit theorem, thus answering a question raised in~\cite[Section~2.2]{BFK-16}.
More precisely, we further unravel the pathwise structure of fluctuations: in the spirit of our recent work with Gloria and Otto~\cite{DGO1} (see also the related heuristics by Armstrong, Gu, and Mourrat in~\cite{GuM}), we show that fluctuations of~$\lambda_\e^k$ are pointwise close to fluctuations of~$\int_U\Xi_{\alpha\beta}^\circ(\tfrac\cdot\e)\partial_\alpha\bar g^k\partial_\beta\bar g^k$,\footnote{Throughout, we use Einstein's convention of summation on repeated indices, here on $\alpha,\beta$.} in terms of the so-called {standard homogenization commutator}
\begin{equation}\label{eq:commut}
\quad\Xi_{\alpha\beta}^\circ:=\ee_\beta\cdot(\Aa-\bar\Aa)(\nabla\varphi_\alpha+\ee_\alpha).
\end{equation}
In other words, while we found in~\cite{DGO1,DGO2} that $\Xi^\circ$ governs fluctuations of the solution operator, we show in the present contribution that this quantity further governs eigenvalue fluctuations.
The characterization of the latter is a direct consequence of this pathwise relation combined with our study of the scaling limit of~$\Xi^\circ$ in~\cite{DGO1,DO1,DFG1}.
\end{enumerate}
These different results make a heavy use of refined tools from the recent quantitative theory of stochastic homogenization as developed in~\cite{AKM-book,GNO-reg,GNO-quant,DO1}.

\medskip
We briefly explain how our fluctuation result relates to the spectral statistics conjecture for random operators.
Rescaling the eigenvalue relation~\eqref{eq:eigen-eps}, eigenvalues of the operator~$-\nabla\cdot\Aa\nabla$ on the dilated domain $\tfrac1\e U$ coincide with $\{\e^2\lambda_\e^k\}_{k\ge1}$, and we consider the large-volume limit $\e\downarrow0$.
In this contribution, we show that the first eigenvalues have joint Gaussian fluctuations, in the sense that the vector \mbox{$\e^{-d/2}(\lambda_\e^1-\expecmm{\lambda_\e^1},\ldots,\lambda_\e^n-\expecmm{\lambda_\e^n})$} is asymptotically Gaussian for any fixed~$n$.
This Gaussian fluctuation result at the bottom of the spectrum is new and should be compared to the conjecture that eigenvalues have local Poisson statistics in spectral regions where localization holds (in particular, at edges of the spectrum other than its bottom) and have random matrix GOE statistics in the bulk of regions where delocalization holds.
Rigorous results on Poisson statistics in the localized regime were pioneered by Minami~\cite{Minami} for the Anderson model, and we refer to~\cite{GK14,Hislop-Krishna,Dietlein-Elgart} and references therein for recent developments, but to our knowledge the problem still remains open for the divergence-form operator $-\nabla\cdot\Aa\nabla$ apart from the 1D case covered in~\cite{Shirley-thesis}. Rigorous results on GOE statistics in the delocalized regime are only known in the simplified setting of random band matrix models~\cite{BYY,Bourgade}.

\medskip
The article is organized as follows. Precise assumptions and main results are stated in Section~\ref{sec:res}.
We focus for simplicity on a Gaussian model for the coefficient fields $\Aa$, in which case Malliavin calculus is available and simplifies the analysis.
In Section~\ref{sec:tools}, we recall some useful tools from the quantitative theory of stochastic homogenization, including corrector estimates and large-scale regularity theory, and we recall notations from Malliavin calculus. Proofs of the main results are postponed to Section~\ref{sec:proofs}.

\medskip
\subsection*{Notation}
\begin{enumerate}[$\bullet$]
\item We denote by $C\ge1$ any constant that only depends on the dimension $d$, on the ellipticity constant $\nu$ in~\eqref{eq:ell-as}, on the domain $U$, and on $\|a_0\|_{W^{2,\infty}}$ and $\int_{\R^d}[\calC_0]_\infty$ in~\eqref{eq:def-A} and~\eqref{eq:integr} below. We use the notation $\lesssim$ (resp.~$\gtrsim$) for $\le C\times$ (resp.~$\ge\frac1C\times$) up to such a multiplicative constant~$C$. We write $\simeq$ when both $\lesssim$ and $\gtrsim$ hold. We add subscripts to $C,\lesssim,\gtrsim,\simeq$ to indicate dependence on other parameters.
\smallskip\item We denote by $B_r(x)$ the ball of radius $r$ centered at $x$ in $\R^d$, and we write for shortness $B_r:=B_r(0)$, $B(x):=B_1(x)$, and $B:=B_1(0)$.
\smallskip\item For a function $g$ and an exponent $1\le p<\infty$, we write $[g]_p(x):=(\fint_{B(x)}|g|^p)^{1/p}$ for the local moving $\Ld^p$-averages, and similarly $[g]_\infty(x):=\sup_{B(x)}|g|$. For averages at the scale~$\e$, we write $[g]_{p;\e}(x):=(\fint_{B_\e(x)}|g|^p)^{1/p}$.
\end{enumerate}

\section{Main results}\label{sec:res}

\subsection{Assumptions}\label{sec:as}
Let $U\subset\R^d$ be a bounded $C^{1,1}$-domain.
For the random coefficient field $\Aa$, we focus on a Gaussian model: more precisely, we set
\begin{equation}\label{eq:def-A}
\Aa(x)\,:=\,a_0(G(x)),
\end{equation}
where $a_0\in C^2_b(\R^\kappa)^{d\times d}$ is such that the boundedness and uniform ellipticity requirements~\eqref{eq:ell-as} are pointwise satisfied, and where $G:\R^d\times\Omega\to\R^\kappa$ is an $\R^\kappa$-valued centered stationary Gaussian random field on $\R^d$ with covariance function $\calC:\R^d\to\R^{\kappa\times\kappa}$, constructed on a probability space $(\Omega,\Pm)$. In addition, we assume that $G$ has integrable correlations in the following sense: starting from the representation
\[G_i=\calC_{0;ij}\ast\xi_j,\]
where $\xi$ is an $\R^\kappa$-valued Gaussian white noise on $\R^d$ and where the kernel $\calC_0:\R^d\to\R^{\kappa\times \kappa}$ satisfies $\calC_{0;il}\ast\calC_{0;lj}=\calC_{ij}$, we assume that $\calC_0$ satisfies the integrability condition
\begin{equation}\label{eq:integr}
\int_{\R^d}\,[\calC_0]_\infty\,<\,\infty.
\end{equation}
In particular, this entails that the covariance function $\calC$ itself satisfies the same integrability condition $\int_{\R^d}[\calC]_\infty\!<\!\infty$. Moreover, $\calC$ is necessarily continuous, so that $G$ and $\Aa$ are stochastically continuous and jointly measurable on $\R^d\times\Omega$.

\begin{rem}[Relaxation of assumptions]
This Gaussian model~\eqref{eq:def-A}--\eqref{eq:integr} allows to exploit Malliavin calculus techniques, which strongly simplifies the analysis. Our approach can be repeated mutatis mutandis in a corresponding Poisson model or in the iid discrete setting, using corresponding stochastic calculus techniques, e.g.~\cite{Peccati-Reitzner,D-20a}.
It can be further adapted to the case of a degraded stochastic calculus in form of multiscale variance inequalities as we introduced in~\cite{DG20a,DG20b} with Gloria, which are available for a much wider class of mixing coefficient fields.
The general case of an $\alpha$-mixing coefficient field is however much more demanding: we believe that it can be treated using the recent techniques of~\cite{AKM-book,GO4}, but we do not pursue in that direction here.
Finally, the integrability condition~\eqref{eq:integr} is easily relaxed:  the Gaussian model with non-integrable correlations can be treated similarly but would yield different scalings as in~\cite{GNO-quant,DGO2,DFG1}.
\end{rem}

\subsection{Convergence rate for eigenvalues and eigenfunctions}
The following result provides a sharp convergence rate for simple eigenvalues, as well as a quantitative two-scale expansion for corresponding eigenfunctions.
The square root in the convergence rate for eigenfunctions in~\eqref{eq:lam-conv1}--\eqref{eq:lam-conv2} is due to boundary layers.

\begin{theor}\label{th:conv}
For all $k\ge1$ such that $\bar\lambda^k$ is simple, we have for all $q<\infty$,
\begin{eqnarray}
\|\lambda_\e^k-\bar\lambda^k\|_{\Ld^q(\Omega)}&\lesssim_{k,q}&\e\mu_d(\tfrac1\e),\label{eq:lam-conv}\\
\|g_\e^k-\bar g^k\|_{\Ld^q(\Omega;\Ld^2(U))}&\lesssim_{k,q}&(\e\mu_d(\tfrac1\e))^\frac12,\label{eq:lam-conv1}\\
\|\nabla g_\e^k-(\nabla\varphi_\alpha+\ee_\alpha)(\tfrac\cdot\e)\partial_\alpha\bar g^k\|_{\Ld^q(\Omega;\Ld^2(U))}&\lesssim_{k,q}&(\e\mu_d(\tfrac1\e))^\frac12,\label{eq:lam-conv2}
\end{eqnarray}
in terms of
\begin{equation}\label{eq:mudr}
\mu_d(r)\,:=\,\left\{\begin{array}{lll}
1&:&d>2,\\
\log(2+r)^\frac12&:&d=2,\\
(1+r)^\frac12&:&d=1.
\end{array}\right.\qedhere
\end{equation}
\end{theor}

\subsection{Eigenvalue fluctuations}
The following result shows that eigenvalue fluctuations are governed to leading order by fluctuations of the so-called standard homogenization commutator~\eqref{eq:commut}. Combined with the scaling limit for the latter in~\cite{DGO1,DO1,DFG1}, this yields a full characterization of eigenvalue fluctuations together with a convergence rate.

\begin{theor}\label{th:fluct}
For all $k\ge1$ such that $\bar\lambda^k$ is simple, we have for all $q<\infty$,
\begin{equation}\label{eq:pathwise}
\e^{-\frac d2}\Big\|\lambda_\e^k-\expecmm{\lambda_\e^k}-\int_U\Xi_{\alpha\beta}^\circ(\tfrac\cdot\e)\partial_\alpha\bar g^k\partial_\beta\bar g^k\Big\|_{\Ld^q(\Omega)}\,\lesssim_{k,q}\,(\e\mu_d(\tfrac1\e))^\frac12,
\end{equation}
where we recall that the standard homogenization commutator $\Xi^\circ$ is defined in~\eqref{eq:commut}, and where $\mu_d$ is given by~\eqref{eq:mudr}.
Combined with the known scaling limit for $\Xi^\circ$, cf.~\cite{DGO1,DO1,DFG1}, this yields for all $k_1,\ldots, k_n\ge1$ such that $\bar\lambda^{k_1},\ldots,\bar\lambda^{k_n}$ are simple,
\[W_2\bigg({\e^{-\frac d2}\Big(\big(\lambda_\e^{k_1}-\expecmm{\lambda_\e^{k_1}}\big),\ldots,\big(\lambda_\e^{k_n}-\expecmm{\lambda_\e^{k_n}}\big)\Big)\,};{\,\Nc_{k_1,\ldots,k_n}}\bigg)\,\lesssim_{k_1,\ldots,k_n}\,(\e\mu_d(\tfrac1\e))^\frac12,\]
where $W_2(\cdot;\cdot)$ denotes the $2$-Wasserstein distance and where $\Nc_{k_1,\ldots,k_n}$ stands for the $n$-dimensional centered Gaussian vector with covariance
\[\expec{(\Nc_{k_1,\ldots,k_n})_i(\Nc_{k_1,\ldots,k_n})_j}\,=\,\int_{\R^d}(\nabla\bar g^{k_i}\otimes \nabla\bar g^{k_i}):\calQ\,(\nabla\bar g^{k_j}\otimes\nabla\bar g^{k_j}),\]
where the $4$th-order tensor $\calQ\in\R^{d\times d\times d\times d}$ is given by the following Green--Kubo formula, for any cut-off function $\chi\in C^\infty_c(\R^d)$ with $\chi(0)=1$,
\begin{equation}\label{eq:GK}
\calQ_{\alpha'\beta'\alpha\beta}\,:=\,\lim_{L\uparrow\infty}\int_{\R^d}\chi(\tfrac1Lx)\,\cov{\Xi_{\alpha'\beta'}^\circ(0)}{\Xi_{\alpha\beta}^\circ(x)}dx.\qedhere
\end{equation}
\end{theor}

\begin{rem}
As shown in~\cite{DGO1,DO1,DFG1}, although the covariance function of the homogenization commutator $\Xi^\circ$ is only borderline integrable,
\[\big|\!\cov{\Xi_{\alpha'\beta'}^\circ(0)}{\Xi_{\alpha\beta}^\circ(x)}\!\big|\,\lesssim\,(1+|x|)^{-d},\]
the limit~\eqref{eq:GK} indeed exists and the convergence holds with rate $O(L^{-1}\mu_d(L))$.
Alternatively, in terms of Malliavin calculus, the effective tensor~$\calQ$ can be expressed as
\begin{multline*}
\calQ_{\alpha'\beta'\alpha\beta}\,:=\int_{\R^d}\calC_{ij}(y)\,\,\E\Big[\big((\nabla\varphi_{\beta'}+\ee_{\beta'})\cdot \partial_ia_0(G)(\nabla\varphi_{\alpha'}+\ee_{\alpha'})\big)(0)\label{eq:def-Q}\\
\times(\Lc+1)^{-1}\big((\nabla\varphi_{\beta}+\ee_\beta)\cdot \partial_ja_0(G)(\nabla\varphi_{\alpha}+\ee_\alpha)\big)(y)\Big]\,dy,
\end{multline*}
where $\Lc$ is the Ornstein--Uhlenbeck operator  associated with the Malliavin calculus with respect to the underlying Gaussian field $G$, cf.~\eqref{eq:def-OU} below.
\end{rem}

\section{Main tools}\label{sec:tools}
In this section, we recall useful tools both from the quantitative theory of stochastic homogenization, including corrector estimates and large-scale regularity theory, and from Malliavin calculus.

\subsection{Tools from quantitative homogenization theory}
The following result recalls the definition of correctors and flux corrector, e.g.~\cite[Lemma~1]{GNO-reg}, which are key to describe fine oscillations of the solution operator. Note that the flux corrector $\sigma_\alpha$ is defined as a vector potential for the flux $q_\alpha=\Aa(\nabla\varphi_\alpha+\ee_\alpha)-\bar\Aa\ee_\alpha$, cf.~\eqref{eq:prop-sig}, and the defining equation~\eqref{eq:def-sig} amounts to choosing the Coulomb gauge.

\begin{lem}[Correctors; \cite{GNO-reg}]\label{lem:cor}
For all $1\le \alpha\le d$, there exists a unique solution $\varphi_\alpha$ to the following infinite-volume corrector problem:
\begin{enumerate}[\quad$\bullet$]
\item Almost surely, $\varphi_\alpha$ belongs to $H^1_\loc(\R^d)$ and satisfies in the weak sense
\[\quad-\nabla\cdot\Aa(\nabla\varphi_\alpha+\ee_\alpha)\,=\,0,\qquad\text{in $\R^d$}.\]
\item The gradient field $\nabla\varphi_\alpha$ is stationary, has vanishing expectation, and has bounded second moment, and $\varphi_\alpha$ satisfies the anchoring condition $\fint_B\varphi_\alpha=0$ almost surely.
\end{enumerate}
In addition, there exists a unique random $2$-tensor field $\sigma_\alpha=\{\sigma_{\alpha\beta\gamma}\}_{1\le \beta,\gamma\le d}$ that satisfies the following infinite-volume problem:
\begin{enumerate}[\quad$\bullet$]
\item For all $1\le\beta,\gamma\le d$, almost surely, $\sigma_{\alpha\beta\gamma}$ belongs to $H^1_\loc(\R^d)$ and satisfies in the weak sense
\begin{equation}\label{eq:def-sig}
\quad-\triangle\sigma_{\alpha\beta\gamma}\,=\,\partial_\beta (q_{\alpha})_\gamma-\partial_\gamma (q_{\alpha})_\beta,\qquad\text{in $\R^d$},
\end{equation}
in terms of the flux $q_\alpha:=\Aa(\nabla\varphi_\alpha+\ee_\alpha)-\bar\Aa\ee_\alpha$.
\smallskip\item The gradient field $\nabla\sigma_\alpha$ is stationary, has vanishing expectation, and has bounded second moment, and $\sigma_\alpha$ satisfies the anchoring condition $\fint_B\sigma_\alpha=0$ almost surely.
\end{enumerate}
In particular, this definition entails
\begin{equation}\label{eq:prop-sig}
\nabla\cdot\sigma_\alpha=q_\alpha,\qquad\sigma_{\alpha\beta\gamma}=-\sigma_{\alpha\gamma\beta}.
\qedhere
\end{equation}
\end{lem}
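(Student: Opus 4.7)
The plan is to construct both $\varphi_\alpha$ and $\sigma_{\alpha\beta\gamma}$ by the now-standard variational method in the Hilbert space $\Hf$ of stationary, mean-zero, curl-free random fields in $\Ld^2(\Omega;\R^d)$. Lax--Milgram on $\Hf$ produces the relevant gradient field; a scalar potential in $H^1_\loc(\R^d)$ is then recovered from stationarity and curl-freeness via path integration and pinned down by the anchoring condition $\fint_B\cdot=0$; and both uniqueness and the algebraic identities reduce to the Liouville rigidity that any stationary mean-zero gradient in $\Hf$ solving the homogeneous elliptic (or Laplace) equation must vanish, a statement itself immediate from coercivity of the corresponding bilinear form on $\Hf$.

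For the corrector, I would apply Lax--Milgram with the continuous and (by \eqref{eq:ell-as}) coercive bilinear form $a(F,G):=\expec{F\cdot\Aa G}$ and the continuous linear form $\ell_\alpha(G):=-\expec{G\cdot\Aa\ee_\alpha}$: this produces a unique $F_\alpha\in\Hf$ with $\expec{G\cdot\Aa(F_\alpha+\ee_\alpha)}=0$ for all $G\in\Hf$. A scalar realization $\varphi_\alpha\in H^1_\loc(\R^d)$ with $\nabla\varphi_\alpha=F_\alpha$ exists up to a random additive constant which is fixed by the anchoring condition, and testing the variational identity against gradients of compactly supported test functions converts it into the weak corrector equation on $\R^d$. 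Uniqueness in the prescribed class is immediate: the difference of two correctors has stationary gradient in $\Hf$ solving the homogeneous corrector equation, hence vanishes by $\Aa$-coercivity on $\Hf$.

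For the flux corrector, I would apply the same procedure to the $\Hf$ inner product $(F,G)\mapsto\expec{F\cdot G}$ with the continuous linear form $G\mapsto-\expec{G_\beta(q_\alpha)_\gamma-G_\gamma(q_\alpha)_\beta}$ (well-defined since $q_\alpha\in\Ld^2(\Omega)$), producing a unique $\nabla\sigma_{\alpha\beta\gamma}\in\Hf$ and hence $\sigma_{\alpha\beta\gamma}\in H^1_\loc(\R^d)$ solving~\eqref{eq:def-sig}. Antisymmetry $\sigma_{\alpha\beta\gamma}=-\sigma_{\alpha\gamma\beta}$ is immediate by uniqueness, since the right-hand side of~\eqref{eq:def-sig} is antisymmetric in $(\beta,\gamma)$. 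For the divergence identity, I would use $\nabla\cdot q_\alpha=0$ (which is the corrector equation rewritten in terms of $q_\alpha=\Aa(\nabla\varphi_\alpha+\ee_\alpha)-\bar\Aa\ee_\alpha$) to compute
\[
-\triangle\big(\partial_\gamma\sigma_{\alpha\beta\gamma}-(q_\alpha)_\beta\big)=\partial_\gamma\big(\partial_\beta(q_\alpha)_\gamma-\partial_\gamma(q_\alpha)_\beta\big)+\triangle(q_\alpha)_\beta=\partial_\beta(\nabla\cdot q_\alpha)=0,
\]
so that $\partial_\gamma\sigma_{\alpha\beta\gamma}-(q_\alpha)_\beta$ is a stationary mean-zero harmonic field in $\Ld^2(\Omega)$, hence identically zero by the Liouville rigidity, which yields $(\nabla\cdot\sigma_\alpha)_\beta=(q_\alpha)_\beta$ as claimed. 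The main obstacle is precisely this rigidity statement: once it is in hand (as a consequence of coercivity on $\Hf$ together with ergodicity), every other step in the proof is routine.
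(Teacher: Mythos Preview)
The paper does not supply its own proof of this lemma: it is stated as a known result and attributed to~\cite[Lemma~1]{GNO-reg}, with no argument given in the text. Your proposal is the standard variational construction that one finds in that reference, and is correct in outline. One small imprecision: for the divergence identity you invoke ``Liouville rigidity'' for the quantity $\partial_\gamma\sigma_{\alpha\beta\gamma}-(q_\alpha)_\beta$, but this is a stationary $\Ld^2(\Omega)$ \emph{function} that is harmonic, not a stationary gradient in $\Hf$; the right argument here is Weyl's lemma together with the mean-value property and ergodicity (so that $u(x)=\fint_{B_R(x)}u\to\expecmm{u}=0$ a.s.), rather than coercivity on $\Hf$. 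You hint at ergodicity at the end, so this is only a matter of being precise about which Liouville statement is being used at which step.
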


Next, in the present Gaussian setting~\eqref{eq:def-A}--\eqref{eq:integr}, we have the following moment bounds on corrector gradients, as well as optimal estimates on the sublinearity of correctors, see~\cite{AKM-book,GNO-quant}.
In dimension $d>2$, these estimates ensure that correctors $\varphi,\sigma$ can be chosen themselves as stationary fields.

\begin{theor}[Corrector estimates; \cite{AKM-book,GNO-quant}]\label{th:cor}
For all $q<\infty$,
\[\|[\nabla\varphi]_2\|_{\Ld^q(\Omega)}+\|[\nabla\sigma]_2\|_{\Ld^q(\Omega)}\,\lesssim_q\,1,\]
and for all $x\in\R^d$,
\[\|[\varphi]_2(x)\|_{\Ld^q(\Omega)}+\|[\sigma]_2(x)\|_{\Ld^q(\Omega)}\,\lesssim_q\,\mu_d(|x|),\]
where we recall that $\mu_d$ is given by~\eqref{eq:mudr}.
In addition, the following Meyers-type improvement holds: there exists a constant $C_0\simeq1$ such that for all~$2\le p\le2+\frac1{C_0}$ the local quadratic averages $[\cdot]_2$ in the above estimates can be replaced by $[\cdot]_p$.
\end{theor}

A key insight in quantitative stochastic homogenization theory is the idea of large-scale regularity, which started with Avellaneda and Lin~\cite{Avellaneda-Lin-87} in the periodic setting, then with Armstrong and Smart~\cite{AS} in the random setting, and was fully developed in recent years in~\cite{Armstrong-Mourrat-16,AKM-book,GNO-reg}: due to homogenization, the heterogeneous elliptic operator \mbox{$-\nabla\cdot\Aa\nabla$} can be expected to inherit the same regularity properties as its homogenized version $-\nabla\cdot\bar\Aa\nabla$ on large scales.
For our purpose in this work, we focus on large-scale $\Ld^p$-regularity and we appeal to a convenient annealed version that we established in~\cite[Theorem~6.1]{DO1} with Otto.
More precisely, while in~\cite{DO1} only interior $\Ld^p$-regularity was established, the following is further stated to hold globally on any bounded domain with Dirichlet boundary conditions: the proof follows as in~\cite[Section~6]{DO1} up to replacing the use of large-scale interior Lipschitz regularity by corresponding global regularity as developed in~\cite[Section~3.5]{AKM-book} (see also~\cite{Fischer-Raithel}).

\begin{theor}[Annealed $\Ld^p$-regularity; \cite{DO1,AKM-book}]\label{th:ann-reg}
Let $D\subset\R^d$ be a bounded $C^{1,\gamma}$-domain for some $\gamma>0$.
For all $0<\e\le1$ and $h\in C^\infty_c(D;\Ld^\infty(\Omega)^d)$, if $u_{\e;h}$ is almost surely the unique solution in $H^1_0(D)$ of
\[-\nabla\cdot\Aa(\tfrac\cdot\e)\nabla u_{\e;h}=\nabla\cdot h,\qquad\text{in $U$},\]
then there holds for all $1<p,q<\infty$ and $\delta>0$,
\[\|[\nabla u_{\e;h}]_{2;\e}\|_{\Ld^p(D;\Ld^q(\Omega))}\,\lesssim_{D,p,q,\delta}\,\|[h]_{2;\e}\|_{\Ld^p(D;\Ld^{q+\delta}(\Omega))}.\qedhere\]
\end{theor}

\subsection{Tools from Malliavin calculus}
We recall some classical notation and tools from Malliavin calculus; we refer e.g.\@ to~\cite{NP-08} for details.
We set
\[\calG(\zeta):=\int_{\R^d}G\cdot\zeta,\qquad\text{for all $\zeta\in C^\infty_c(\R^d)^\kappa$,}\]
which are jointly Gaussian random variables with covariance
\[\cov{\calG(\zeta)}{\calG(\zeta')}\,=\,\iint_{\R^d\times\R^d}\calC_{ij}(x-y)\,\zeta_i(x)\zeta_j'(y)\,dxdy, \qquad\zeta,\zeta'\in C^\infty_c(\R^d)^\kappa.\]
Defining $\Hf$ as the closure of $C^\infty_c(\R^d)^\kappa$ for this (semi)norm,
\[\|\zeta\|_\Hf^2:=\langle\zeta,\zeta\rangle_{\Hf},\qquad\langle\zeta,\zeta'\rangle_{\Hf}:=\iint_{\R^d\times\R^d}\calC_{ij}(x-y)\,\zeta_i(x)\zeta'_j(y)\,dxdy,\]
we may extend by density the definition of $\calG(\zeta)\in\Ld^2(\Omega)$ to all $\zeta\in\Hf$.
The space~$\Hf$ (up to taking the quotient with respect to the kernel of $\|\cdot\|_\Hf$) is a separable Hilbert space and embeds isometrically into $\Ld^2(\Omega)$ via $\zeta\mapsto\calG(\zeta)$.
In view of the integrability condition~\eqref{eq:integr}, the norm of $\Hf$ is bounded by
\begin{equation}\label{eq:integr-cons}
\|\zeta\|_\Hf\,\lesssim\,\|[\zeta]_1\|_{\Ld^2(\R^d)}.
\end{equation}
Without loss of generality we can assume that the probability space is endowed with the $\sigma$-algebra generated by the Gaussian field $G$, so that the linear subspace
\[\Sc(\Omega)\,:=\,\Big\{g(\calG(\zeta_1),\ldots,\calG(\zeta_n))\,:\,n\in\N,\,g\in C^\infty_c(\R^n),\,\zeta_1,\ldots,\zeta_n \in \Hf\Big\}\]
is dense in $\Ld^2(\Omega)$. We may thus define operators on this simpler subspace $\Sc(\Omega)$ before extending them by density to $\Ld^2(\Omega)$.
For a random variable $X\in\Sc(\Omega)$, say $X=g(\calG(\zeta_1),\ldots,\calG(\zeta_n))$, we define its {Malliavin derivative} $DX\in\Ld^2(\Omega;\Hf)$ as
\[DX\,:=\,\sum_{i=1}^n\zeta_i\,(\partial_ig)(\calG(\zeta_1),\ldots,\calG(\zeta_n)).\]
We can check that this operator $D:\Sc(\Omega)\subset\Ld^2(\Omega)\to\Ld^2(\Omega;\Hf)$ is closable, and we still denote by $D$ its closure.
Next, we define the divergence operator $D^*$ as the adjoint of $D$, and we construct the so-called {Ornstein--Uhlenbeck operator}
\begin{equation}\label{eq:def-OU}
\calL\,:=\,D^*D,
\end{equation}
which is well-defined as an essentially self-adjoint nonnegative operator on $\Sc(\Omega)\subset\Ld^2(\Omega)$.
With this notation, we may now state the following useful classical result; a short proof and relevant references can be found e.g.\@ in~\cite[Proposition~4.1]{DO1}.

\begin{prop}[Malliavin--Poincaré inequality]\label{prop:Mall}
For all $X\in \Sc(\Omega)$
and $q<\infty$,
\[\|X-\expec{X}\!\|_{\Ld^{2q}(\Omega)}\,\lesssim\,q^\frac12\|DX\|_{\Ld^{2q}(\Omega;\Hf)}.\qedhere\]
\end{prop}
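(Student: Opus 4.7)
The plan is to prove this $\Ld^{2q}$-Poincar\'e inequality by combining the Ornstein--Uhlenbeck semigroup with a Clark--Ocone martingale representation, which is the standard route producing the sharp $\sqrt q$ scaling.

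First, I would set up the semigroup structure. Using the Wiener chaos decomposition $\Ld^2(\Omega)=\bigoplus_{n\ge0}\mathcal{H}_n$ of the Gaussian space, the operator $\Lc$ acts as multiplication by $n$ on $\mathcal{H}_n$; hence $P_t:=e^{-t\Lc}$ is a Markov semigroup satisfying $P_tX\to\expecmm{X}$ as $t\to\infty$ and obeying the intertwining $DP_t=e^{-t}P_tD$ (as $D$ maps $\mathcal{H}_n$ into $\Hf\otimes\mathcal{H}_{n-1}$). Integrating the identity $-\partial_tP_tX=\Lc P_tX=D^*DP_tX=e^{-t}D^*P_tDX$ over $t\in(0,\infty)$ then yields the Helffer-type representation
\[X-\expec{X}\,=\,\int_0^\infty e^{-t}\,D^*P_tDX\,dt.\]

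Next, I would exploit the realization $G=\calC_0*\xi$ to view the probability space as that of the underlying spatial white noise $\xi$, which I endow with a filtration $(\mathcal{F}_r)_{r\ge0}$ obtained by a radial (or any other total) ordering of $\R^d$. The Clark--Ocone formula then expresses $X-\expec{X}$ as a stochastic integral of the $(\mathcal{F}_r)$-predictable projection of the Malliavin derivative $DX$ against the underlying Gaussian martingale. Applying the Burkholder--Davis--Gundy inequality in $\Ld^{2q}(\Omega)$ produces the factor $\sqrt q$ as the sharp martingale constant, and the conditional Jensen inequality together with the Malliavin isometry allow me to discard the predictable projection, yielding $\|X-\expec{X}\|_{\Ld^{2q}(\Omega)}\lesssim\sqrt q\,\|DX\|_{\Ld^{2q}(\Omega;\Hf)}$.

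The main subtlety is implementing the Clark--Ocone formula cleanly for the general Gaussian field~$G$: one has to check that the Malliavin derivative built from~$G$ (used in the statement) agrees, under the isometric identification $\Hf\simeq\Ld^2(\R^d)^\kappa$ induced by $\calC_0$, with the Malliavin derivative attached to the white-noise filtration, so that BDG applies verbatim. A fully semigroup-theoretic alternative avoiding stochastic integration --- closer in spirit to \cite[Proposition~4.1]{DO1} --- starts from the integral representation above and combines Gaussian hypercontractivity (Nelson) with Meyer's inequality for $D^*$; this also delivers the $\sqrt q$ scaling but through a slightly more delicate interpolation. In either case, density of $\Sc(\Omega)$ in $\Ld^{2q}(\Omega)$ extends the estimate from $\Sc(\Omega)$ to the full domain of~$D$.
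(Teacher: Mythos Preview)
The paper does not prove this proposition; it simply refers to \cite[Proposition~4.1]{DO1} for a short proof. So there is no in-paper argument to compare against, and your sketch must stand on its own.

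Your Clark--Ocone route has a genuine gap at the step where you ``discard the predictable projection.'' After BDG you must control $\big\|\big(\int|\E[D_sX\mid\mathcal F_s]|^2\,ds\big)^{1/2}\big\|_{\Ld^{2q}(\Omega)}$ by $\|DX\|_{\Ld^{2q}(\Omega;\Hf)}$, and conditional Jensen only yields the pointwise bound $|\E[D_sX\mid\mathcal F_s]|^2\le \E[|D_sX|^2\mid\mathcal F_s]$. Passing from $\int\E[|D_sX|^2\mid\mathcal F_s]\,ds$ to $\int|D_sX|^2\,ds$ in $\Ld^q(\Omega)$ for $q>1$ is \emph{not} free: this is Stein's inequality for the predictable projection, and its standard proof via Doob's maximal inequality produces a constant of order~$\sqrt{q}$. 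Combined with the $\sqrt{q}$ from BDG you obtain only $\lesssim q\,\|DX\|_{\Ld^{2q}(\Omega;\Hf)}$, not the claimed $q^{1/2}$. No ``Malliavin isometry'' repairs this, since isometry statements are $\Ld^2$ identities and do not control higher moments. The clean fix is already latent in your first paragraph: run the martingale along the Ornstein--Uhlenbeck process rather than along a spatial filtration of the white noise. For the stationary OU process $Z$ driven by an $\Hf$-valued Brownian motion~$B$, the process $M_s:=P_{t-s}X(Z_s)$ is a martingale with $dM_s=\sqrt{2}\,e^{-(t-s)}(P_{t-s}DX)(Z_s)\cdot dB_s$; its quadratic variation is controlled via $\|P_uDX\|_\Hf^2\le P_u(\|DX\|_\Hf^2)$ together with the $\Ld^q$-contractivity of $P_u$ and stationarity of $Z$, with a constant independent of~$q$, so BDG alone gives the sharp~$\sqrt{q}$. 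Pisier's rotation argument is an equivalent alternative that avoids stochastic integration altogether.
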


\section{Proof of main results}\label{sec:proofs}

This section is devoted to the proof of our main results. After a few preliminary estimates, Theorems~\ref{th:conv} and~\ref{th:fluct} are established in Sections~\ref{sec:conv} and~\ref{sec:fluct}, respectively.

\subsection{Preliminary estimates}

The following lemma provides uniform bounds on eigenvalues and eigenfunctions.
Uniform bounds on gradients of eigenfunctions, cf.~\eqref{eq:bnd-nabg}, are based on large-scale regularity theory.

\begin{lem}\label{lem:lam-g-nabg}
For all $k\ge1$, we have almost surely,
\begin{equation}\label{eq:bnd-lambda}
\lambda_\e^k\,\simeq\, |k|^2,
\end{equation}
\begin{equation}\label{eq:bnd-g}
|g_\e^k|\,\lesssim_k\,1,
\end{equation}
and for all $1<p,q<\infty$,
\begin{equation}\label{eq:bnd-nabg}
\|[\nabla g_\e^k]_{2;\e}\|_{\Ld^p(U;\Ld^q(\Omega))}\,\lesssim_{k,p,q}\,1.
\end{equation}
In addition, the following Meyers-type improvement holds: there exists a constant $C_0\simeq1$ (independent of $\e,k$) such that for all~$2\le p\le2+\frac1{C_0}$ the local quadratic averages $[\cdot]_{2;\e}$ in~\eqref{eq:bnd-nabg} can be replaced by $[\cdot]_{p;\e}$.
\end{lem}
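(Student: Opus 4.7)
The plan is to establish the three bounds in turn using, respectively, the min-max principle for~\eqref{eq:bnd-lambda}, Moser iteration for~\eqref{eq:bnd-g}, and the annealed $\Ld^p$-regularity of Theorem~\ref{th:ann-reg} for~\eqref{eq:bnd-nabg}. For~\eqref{eq:bnd-lambda}, I would apply the variational characterization
\[\lambda_\e^k=\min_{\substack{V\subset H^1_0(U)\\ \dim V=k}}\max_{u\in V\setminus\{0\}}\frac{\int_U\nabla u\cdot\Aa(\tfrac\cdot\e)\nabla u}{\int_U u^2},\]
which by the pointwise ellipticity~\eqref{eq:ell-as} is sandwiched between $\nu\mu_k$ and $\mu_k$, where $\mu_k$ denotes the $k$-th Dirichlet eigenvalue of $-\triangle$ on $U$; Weyl's law then gives the required two-sided control with constants depending only on $d$, $\nu$, and $U$.

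For~\eqref{eq:bnd-g}, a standard Moser iteration for the eigenfunction equation, based purely on~\eqref{eq:ell-as} and carried out up to the $C^{1,1}$-boundary of $U$, yields the deterministic bound $\|g_\e^k\|_{\Ld^\infty(U)}\lesssim(1+\lambda_\e^k)^{d/4}\|g_\e^k\|_{\Ld^2(U)}$, from which~\eqref{eq:bnd-g} follows in view of~\eqref{eq:bnd-lambda} and the normalization $\|g_\e^k\|_{\Ld^2(U)}=1$.

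For~\eqref{eq:bnd-nabg}, the idea is to recast the eigenvalue equation in divergence form in order to apply Theorem~\ref{th:ann-reg}. Let $v_\e^k\in H^1_0(U)$ be the unique solution of $-\triangle v_\e^k=\lambda_\e^k g_\e^k$ in $U$ and set $h_\e^k:=\nabla v_\e^k$, so that $g_\e^k$ satisfies
\[-\nabla\cdot\Aa(\tfrac\cdot\e)\nabla g_\e^k\,=\,\nabla\cdot h_\e^k\quad\text{in $U$}.\]
Classical Calder\'on--Zygmund theory on the $C^{1,1}$-domain $U$, combined with~\eqref{eq:bnd-lambda}--\eqref{eq:bnd-g}, yields $\|h_\e^k\|_{\Ld^\infty(U)}\lesssim_k1$ almost surely, so that Theorem~\ref{th:ann-reg} then gives
\[\|[\nabla g_\e^k]_{2;\e}\|_{\Ld^p(U;\Ld^q(\Omega))}\,\lesssim_{p,q,\delta}\,\|[h_\e^k]_{2;\e}\|_{\Ld^p(U;\Ld^{q+\delta}(\Omega))}\,\lesssim_{k,p,q}\,1.\]
The Meyers-type improvement would follow by combining this with the classical deterministic Meyers self-improvement for divergence-form equations, which is uniform in $\e$ since the ellipticity constants of $\Aa(\tfrac\cdot\e)$ are $\e$-independent.

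The main obstacle I anticipate is verifying, in this boundary context, that Theorem~\ref{th:ann-reg} can indeed be applied as stated (the original version in~\cite{DO1} treats only interior regularity, but the discussion preceding the theorem in the excerpt asserts the global statement) and that the small-scale deterministic Meyers estimate combines cleanly with the annealed bound so as to raise the exponent of $[\cdot]_{\cdot\,;\e}$ on the left-hand side; once these are in hand, the three assertions reduce to routine manipulations together with the key input of Theorem~\ref{th:ann-reg}.
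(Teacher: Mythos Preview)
Your proposal is correct and follows essentially the same route as the paper for~\eqref{eq:bnd-lambda},~\eqref{eq:bnd-nabg}, and the Meyers improvement: the paper, too, uses spectral comparison for~\eqref{eq:bnd-lambda}, introduces the same auxiliary Dirichlet problem $\triangle h_\e^k=\lambda_\e^kg_\e^k$ to feed into Theorem~\ref{th:ann-reg} for~\eqref{eq:bnd-nabg}, and runs Caccioppoli plus Gehring on the eigenvalue equation for the Meyers step (which is exactly what your ``classical deterministic Meyers self-improvement'' amounts to once spelled out). The one genuine difference is the pointwise bound~\eqref{eq:bnd-g}: the paper uses the reproducing identity $g_\e^k=e^{\lambda_\e^kt}P_\e^tg_\e^k$ for the Dirichlet heat semigroup together with Nash--Aronson kernel bounds, whereas you propose Moser iteration. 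Both are classical and give the same result with constants depending only on $d,\nu,U,\lambda_\e^k$; your purely elliptic argument is self-contained and avoids parabolic input, while the semigroup trick is slightly slicker once Nash--Aronson is in hand. Your two stated concerns are not obstacles: the global (up-to-boundary) version of Theorem~\ref{th:ann-reg} is exactly what the paper invokes, and the combination of the deterministic Meyers bound with the annealed estimate is precisely the content of the paper's Step~3, where the local Gehring output $[\nabla g_\e^k]_{p;\e}\lesssim[\nabla g_\e^k]_{2;2\e}+\lambda_\e^k[g_\e^k]_{p;2\e}$ is post-processed using~\eqref{eq:bnd-lambda}--\eqref{eq:bnd-nabg}.
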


\begin{proof}
We split the proof into three steps.

\medskip
\step1 Proof of deterministic estimates~\eqref{eq:bnd-lambda} and~\eqref{eq:bnd-g}.\\
The first estimate~\eqref{eq:bnd-lambda} follows from a spectral comparison argument based on the uniform ellipticity condition~\eqref{eq:ell-as}.
We turn to the proof of~\eqref{eq:bnd-g} and we appeal to a similar reproducing kernel trick as in~\cite{BFK-16}: the eigenvalue relation~\eqref{eq:eigen-eps} yields for all $k\ge1$ and~$t\ge0$,
\begin{equation*}
g_\e^k\,=\,e^{\lambda_\e^k t}P_\e^tg_\e^k,
\end{equation*}
in terms of the Dirichlet semigroup $P_\e^t:=e^{t\nabla\cdot\Aa(\frac\cdot\e)\nabla}$ on $U$. Noting that the latter is bounded by the corresponding whole-space semigroup, and appealing to the Nash--Aronson estimates, we deduce almost surely
\[|g_\e^k|\,\lesssim\,e^{\lambda_\e^k t}\int_{U}t^{-\frac d2}\exp(-\tfrac{\nu}{8t}|\cdot-y|^2)\,|g_\e^k(y)|\,dy.\]
Choosing $t=1$, using~\eqref{eq:bnd-lambda}, and recalling that $g_\e^k$ is normalized, we conclude
\[|g_\e^k|\,\lesssim_k\,1.\]

\medskip
\step2 Proof of~\eqref{eq:bnd-nabg}.\\
Considering the unique solution $h_\e^k\in H^1_0(U)$ of the Laplace equation
\begin{equation}\label{eq:def-heps}
\triangle h_\e^k=\lambda_\e^k g_\e^k\qquad\text{in $U$},
\end{equation}
we can rewrite the eigenvalue relation~\eqref{eq:eigen-eps} as
\[-\nabla\cdot\Aa(\tfrac\cdot\e)\nabla g_\e^k=\nabla\cdot(\nabla h_\e^k)\qquad\text{in $U$}.\]
Appealing to annealed $\Ld^p$-regularity in form of Theorem~\ref{th:ann-reg}, we deduce for all $1<p,q<\infty$ and $\delta>0$,
\[\|[\nabla g_\e^k]_{2;\e}\|_{\Ld^p(U;\Ld^q(\Omega))}\,\lesssim_{p,q,\delta}\|[\nabla h_\e^k]_{2;\e}\|_{\Ld^p(U;\Ld^{q+\delta}(\Omega))}\,\le\,\|\nabla h_\e^k\|_{\Ld^\infty(\Omega;\Ld^\infty(U))}.\]
Since Schauder regularity theory applied to equation~\eqref{eq:def-heps} yields almost surely
\[\|\nabla h_\e^k\|_{\Ld^\infty(U)}\,\lesssim\,\lambda_\e^k\|g_\e^k\|_{\Ld^2(U)}\,\lesssim_k\,1,\]
the conclusion~\eqref{eq:bnd-nabg} follows.

\medskip
\step3 Proof of the Meyers-type improvement.\\
For all $0<r\le1$ and $x\in U$, choose a cut-off function $\chi_{r,x}\in C^\infty_c(\R^d)$ with
\[\chi_{r,x}|_{B_r(x)}=1,\qquad\chi_{r,x}|_{\R^d\setminus B_{2r}(x)}=0,\qquad0\le\chi_{r,x}\le1,\qquad\text{and}\qquad|\nabla\chi_{r,x}|\lesssim\tfrac1r.\]
Testing the eigenvalue relation~\eqref{eq:eigen-eps} with $\chi_{r,x}^2(g_\e^k-\fint_{B_{2r}(x)\cap U}g_\e^k)$, and using the properties of the cut-off function $\chi_{r,x}$, we are easily led to the following Caccioppoli estimate,
\[\int_{B_r(x)\cap U}|\nabla g_\e^k|^2\,\lesssim\,\frac1{r^2}\int_{B_{2r}(x)\cap U}\Big|g_\e^k-\fint_{B_{2r}(x)\cap U}g_\e^k\Big|^2+(\lambda_\e^k)^2\int_{B_{2r}(x)\cap U}|g_\e^k|^2.\]
Appealing to the Poincaré--Sobolev inequality, and noting that the regularity of the domain~$U$ ensures $|B_r(x)\cap U|\simeq r^d$ for $r\le1$ and $x\in U$, we deduce
\[\Big(\fint_{B_r(x)\cap U}|\nabla g_\e^k|^2\Big)^\frac12\,\lesssim\,\Big(\fint_{B_{2r}(x)\cap U}|\nabla g_\e^k|^{\frac{2d}{d+2}}\Big)^\frac{d+2}{2d}+\lambda_\e^k\Big(\fint_{B_{2r}(x)\cap U}|g_\e^k|^2\Big)^\frac12.\]
An application of Gehring's lemma~\cite[Proposition~5.1]{Giaquinta-Modica} then ensures the existence of some $C_0\simeq1$ (independent of $\e,k$) such that for all $2\le p\le2+\frac1{C_0}$, for all $0<r\le1$ and $x\in U$,
\[\Big(\fint_{B_r(x)\cap U}|\nabla g_\e^k|^p\Big)^\frac1p\,\lesssim\,\Big(\fint_{B_{2r}(x)\cap U}|\nabla g_\e^k|^2\Big)^\frac12+\lambda_\e^k\Big(\fint_{B_{2r}(x)\cap U}|g_\e^k|^p\Big)^\frac1p.\]
Combined with~\eqref{eq:bnd-lambda}, \eqref{eq:bnd-g}, and~\eqref{eq:bnd-nabg}, this yields the conclusion.
\end{proof}

The following lemma concerns the regularity of eigenfunctions of the homogenized operator.
The proof follows from global $\Ld^p$-regularity theory in a $C^{1,1}$-domain, e.g.~\cite[Theorem~9.13]{Gilbarg-Trudinger-01}, applied to the eigenvalue relation~\eqref{eq:eigen-bar}.

\begin{lem}\label{lem:barg}
For all $k\ge1$, we have for all $1<p<\infty$,
\[\|\bar g^k\|_{W^{2,p}(U)}\,\lesssim_{k,p}\,1.\qedhere\]
\end{lem}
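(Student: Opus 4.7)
The plan is to apply the classical $W^{2,p}$ Calder\'on--Zygmund estimate for elliptic equations with constant coefficients on a $C^{1,1}$-domain, exactly as suggested by the authors: since the homogenized operator $-\nabla\cdot\bar\Aa\nabla$ has smooth (indeed constant) coefficients, \cite[Theorem~9.13]{Gilbarg-Trudinger-01} applied to~\eqref{eq:eigen-bar} yields, for all $1<p<\infty$,
\[\|\bar g^k\|_{W^{2,p}(U)}\,\lesssim_p\,\|\bar g^k\|_{\Ld^p(U)}+\bar\lambda^k\|\bar g^k\|_{\Ld^p(U)}.\]
It therefore suffices to establish the two a priori bounds $\bar\lambda^k\lesssim_k1$ and $\|\bar g^k\|_{\Ld^p(U)}\lesssim_{k,p}1$.

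For the eigenvalue bound I would argue by spectral comparison: the uniform ellipticity of $\bar\Aa$ (inherited from~\eqref{eq:ell-as} by standard homogenization bounds) implies via the min--max characterization that $\bar\lambda^k$ is comparable to the $k$-th Dirichlet eigenvalue of $-\triangle$ on $U$, hence $\bar\lambda^k\simeq|k|^{2/d}\lesssim_k1$. This is the deterministic analogue of~\eqref{eq:bnd-lambda} in Lemma~\ref{lem:lam-g-nabg}.

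For the $\Ld^p$-bound on $\bar g^k$ I would mimic the reproducing-kernel trick used in Step~1 of the proof of Lemma~\ref{lem:lam-g-nabg}: writing $\bar g^k=e^{\bar\lambda^k t}\bar P^t\bar g^k$ in terms of the Dirichlet semigroup $\bar P^t=e^{t\nabla\cdot\bar\Aa\nabla}$ on $U$, dominating the latter by the whole-space semigroup on $\R^d$, and using the Gaussian upper bound for the associated constant-coefficient heat kernel, one obtains
\[|\bar g^k|\,\lesssim\,e^{\bar\lambda^k t}\int_U t^{-\frac d2}\exp\big(-\tfrac{\nu}{8t}|\cdot-y|^2\big)|\bar g^k(y)|\,dy.\]
Choosing $t=1$, using the eigenvalue bound above, and recalling the $\Ld^2$-normalization $\|\bar g^k\|_{\Ld^2(U)}=1$, this gives $\|\bar g^k\|_{\Ld^\infty(U)}\lesssim_k1$, and in particular $\|\bar g^k\|_{\Ld^p(U)}\lesssim_k1$ uniformly in $p$.

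There is no real obstacle here: everything is deterministic, the operator has constant coefficients, and the domain is $C^{1,1}$, so the Calder\'on--Zygmund theory applies directly. The only mildly non-routine piece is promoting the $\Ld^2$-normalization of $\bar g^k$ to an $\Ld^p$ (in fact $\Ld^\infty$) estimate so as to feed it into the regularity bound, and this is handled verbatim as in Lemma~\ref{lem:lam-g-nabg}.
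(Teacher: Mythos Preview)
Your proposal is correct and follows exactly the approach indicated in the paper: the paper's proof is a one-line reference to global $W^{2,p}$-regularity \cite[Theorem~9.13]{Gilbarg-Trudinger-01} applied to~\eqref{eq:eigen-bar}, and you have simply spelled out the two inputs (the bound on $\bar\lambda^k$ and the $\Ld^\infty$-bound on $\bar g^k$) that this application requires. Your eigenvalue scaling $\bar\lambda^k\simeq k^{2/d}$ is in fact the correct Weyl asymptotic, sharper than the $|k|^2$ written in~\eqref{eq:bnd-lambda}, but either form suffices here.
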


Next, we establish energy-type estimates on the orthogonal complement of a given eigenspace. The multiplicative constant in the estimate depends on the distance between neighboring eigenvalues, cf.~$\delta_\e^k$ in~\eqref{eq:concl-zL2}.

\begin{lem}\label{lem:prelim}
Given $k\ge1$, let $\pi_\e^k$ denote the orthogonal projection $\pi_\e^k[f]:=(\int_Ufg_\e^k)g_\e^k$ on the $k$th eigenspace.
For all $h\in C^\infty_b(U)^d$, if $u_{\e;h}^k\in H^1_0(U)$ satisfies in $U$,
\begin{equation}\label{eq:z}
(-\lambda_\e^k-\nabla\cdot\Aa(\tfrac\cdot\e)\nabla)u_{\e;h}^k\,=\,(\Id-\pi_\e^k)[\nabla\cdot h],\qquad \pi_\e^k[u_{\e;h}^k]=0,
\end{equation}
then there holds
\begin{equation}\label{eq:concl-zL2}
\|\nabla u_{\e;h}^k\|_{\Ld^2(U)}\,\lesssim_k\,(\delta_\e^k)^{-1}\|h\|_{\Ld^2(U)},
\end{equation}
in terms of $\delta_\e^k:=(\lambda_\e^{k+1}-\lambda_\e^k)\wedge(\lambda_\e^k-\lambda_\e^{k-1})\wedge1$.
In addition, the following Meyers-type improvement holds: there exists a constant $C_0\simeq1$ (independent of $\e,k$) such that for all~$2\le p\le2+\frac1{C_0}$ the $\Ld^2(U)$-norms in~\eqref{eq:concl-zL2} can be replaced by $\Ld^p(U)$-norms.
\end{lem}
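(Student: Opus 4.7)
The plan is to decompose $u:=u_{\e;h}^k$ in the $\Ld^2(U)$-orthonormal basis of eigenfunctions $\{g_\e^j\}_{j\ge1}$. Since $\pi_\e^k[u]=0$, we have $u=\sum_{j\ne k}c_j g_\e^j$ with $c_j:=\int_U u\,g_\e^j$; plugging into~\eqref{eq:z}, using $-\nabla\cdot\Aa(\tfrac\cdot\e)\nabla g_\e^j=\lambda_\e^j g_\e^j$, and integrating by parts on the right-hand side yield
\[
c_j\,=\,-\frac{1}{\lambda_\e^j-\lambda_\e^k}\int_U h\cdot\nabla g_\e^j,\qquad j\ne k.
\]

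\textbf{Energy estimate.} Orthogonality of $\{g_\e^j\}$ in the $\Aa(\tfrac\cdot\e)$-energy inner product yields $\int_U\Aa(\tfrac\cdot\e)\nabla u\cdot\nabla u=\sum_{j\ne k}\lambda_\e^j|c_j|^2$, so by uniform ellipticity
\[
\|\nabla u\|_{\Ld^2(U)}^2\,\lesssim\,\sum_{j\ne k}\frac{\lambda_\e^j}{(\lambda_\e^j-\lambda_\e^k)^2}\Big|\int_U h\cdot\nabla g_\e^j\Big|^2.
\]
For $j\ne k$, the elementary estimate $\lambda_\e^j/|\lambda_\e^j-\lambda_\e^k|\le 1+\lambda_\e^k/\delta_\e^k\lesssim_k(\delta_\e^k)^{-1}$ (using \eqref{eq:bnd-lambda}) gives $\lambda_\e^j/(\lambda_\e^j-\lambda_\e^k)^2\lesssim_k(\delta_\e^k)^{-2}(\lambda_\e^j)^{-1}$. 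Next, I would establish the Parseval-type bound
\[
\sum_j(\lambda_\e^j)^{-1}\Big|\int_U h\cdot\nabla g_\e^j\Big|^2\,\lesssim\,\|h\|_{\Ld^2(U)}^2,
\]
by considering the auxiliary solution $\tilde f\in H^1_0(U)$ of $-\nabla\cdot\Aa(\tfrac\cdot\e)\nabla\tilde f=\nabla\cdot h$: expanding $\tilde f=\sum_j b_j g_\e^j$ gives $\lambda_\e^j b_j=-\int_U h\cdot\nabla g_\e^j$, while $\sum_j\lambda_\e^j|b_j|^2=\int_U\Aa(\tfrac\cdot\e)\nabla\tilde f\cdot\nabla\tilde f\lesssim\|h\|_{\Ld^2(U)}^2$ by uniform ellipticity. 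Combining yields~\eqref{eq:concl-zL2}.

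\textbf{Meyers-type improvement.} I would rewrite~\eqref{eq:z} as $-\nabla\cdot\Aa(\tfrac\cdot\e)\nabla u=\nabla\cdot h+F_\e^k$, where the lower-order source $F_\e^k:=\lambda_\e^k u+\big(\int_U h\cdot\nabla g_\e^k\big)g_\e^k$ arises from the projection. Using~\eqref{eq:bnd-lambda},~\eqref{eq:bnd-g}, the deterministic energy bound $\|\nabla g_\e^k\|_{\Ld^2(U)}\lesssim_k1$, the $\Ld^2$-estimate~\eqref{eq:concl-zL2} just proved, and Sobolev embedding to control $\|u\|_{\Ld^p(U)}$, one obtains $\|F_\e^k\|_{\Ld^p(U)}\lesssim_k(\delta_\e^k)^{-1}\|h\|_{\Ld^p(U)}$ for any $p\ge2$. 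A standard Caccioppoli estimate on balls $B_r(x)\cap U$ with $r\le1$ (testing with $\chi_{r,x}^2(u-\bar u)$ and using the $C^{1,1}$-regularity of $\partial U$ near the boundary), followed by Poincaré--Sobolev and Gehring's self-improving lemma --- exactly as in Step~3 of the proof of Lemma~\ref{lem:lam-g-nabg} --- then yields a reverse Hölder inequality for $\nabla u$ on a range $2\le p\le 2+1/C_0$ with a constant $C_0\simeq1$. Integrating over $x\in U$ and injecting the $\Ld^2$-bound on $\nabla u$ and the $\Ld^p$-bound on $F_\e^k$ produces the desired $\Ld^p$-version of~\eqref{eq:concl-zL2}.

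\textbf{Main obstacle.} The one point requiring care is that the Meyers exponent $C_0$ must be uniform in $\e$ and $k$. This is automatic because the Caccioppoli--Gehring mechanism depends only on the (deterministic) ellipticity constants in~\eqref{eq:ell-as} and on the $C^{1,1}$-regularity of $\partial U$, so that all dependence on $k$ and on $\delta_\e^k$ is absorbed into the multiplicative prefactor via $\|F_\e^k\|_{\Ld^p(U)}$ and the previously established $\Ld^2$ bound.
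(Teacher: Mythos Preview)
Your $\Ld^2$-argument is correct and coincides with the paper's: both expand $u_{\e;h}^k$ in the eigenbasis and control the resulting sum via the auxiliary Dirichlet problem $-\nabla\cdot\Aa(\tfrac\cdot\e)\nabla\tilde f=\nabla\cdot h$ (the paper calls it $v_{\e;h}$). The only cosmetic difference is that you estimate $\|\nabla u\|_{\Ld^2}$ directly via $\sum_j\lambda_\e^j|c_j|^2$, whereas the paper first tests~\eqref{eq:z} with $u$ to reduce to bounding $\|u\|_{\Ld^2}$ and then runs the same eigendecomposition; the arithmetic is equivalent.

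For the Meyers-type improvement the two proofs genuinely diverge. The paper does \emph{not} use Caccioppoli--Gehring here: instead it rewrites~\eqref{eq:z} as a perturbation of the Laplacian,
\[
-\triangle u=\nabla\cdot\big((\tfrac{2}{1+\nu}\Aa(\tfrac\cdot\e)-\Id)\nabla u\big)+\tfrac{2}{1+\nu}\big(\lambda_\e^ku+(\Id-\pi_\e^k)[\nabla\cdot h]\big),
\]
applies global $\Ld^p$-regularity for $-\triangle$ with constant $K(p)\to1$ as $p\to2$, and absorbs the first right-hand side term using $|\tfrac{2}{1+\nu}\Aa-\Id|\le\tfrac{1-\nu}{1+\nu}<1$. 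Your route---Caccioppoli on balls plus Gehring, mirroring Step~3 of Lemma~\ref{lem:lam-g-nabg}---is equally valid and, as you note, yields a $C_0$ depending only on ellipticity and the domain. The paper's approach is slightly slicker in that it avoids the local covering argument and produces the Meyers range in one stroke; your approach has the advantage of reusing a mechanism already set up in the paper and of being more robust (it does not rely on constant-coefficient $\Ld^p$-theory). Either way the $k$- and $\delta_\e^k$-dependence enters only through the lower-order source $F_\e^k$ and the previously established $\Ld^2$-bound, exactly as you argue.
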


\begin{proof}
We split the proof into two steps.

\medskip
\step1 Proof of~\eqref{eq:concl-zL2}.\\
Testing equation~\eqref{eq:z} with $u_{\e;h}^k$ itself,
and using~\eqref{eq:bnd-lambda},
we find
\begin{equation}\label{eq:bnd-nabz/z}
\|\nabla u_{\e;h}^k\|_{\Ld^2(U)}\,\lesssim_k\,\|u_{\e;h}^k\|_{\Ld^2(U)}+\|h\|_{\Ld^2(U)},
\end{equation}
and it remains to estimate the $\Ld^2(U)$-norm of $u_{\e;h}^k$.
Inverting equation~\eqref{eq:z} on eigenspaces yields
\begin{equation}\label{eq:form-z}
u_{\e;h}^k\,=\,-\sum_{j:j\ne k}\frac1{\lambda_\e^j-\lambda_\e^k}\Big(\int_Uh\cdot\nabla g_\e^j\Big)g_\e^j.
\end{equation}
Taking the $\Ld^2$-norm and recalling that $\{g_\e^j\}_j$ is an orthonormal system in $\Ld^2(U)$, we deduce
\begin{equation*}
\|u_{\e;h}^k\|_{\Ld^2(U)}^2\,=\,\sum_{j:j\ne k}\frac1{(\lambda_\e^j-\lambda_\e^k)^2}\Big(\int_Uh\cdot\nabla g_\e^j\Big)^2.
\end{equation*}
In terms of the solution $v_{\e;h}\in H^1_0(U)$ of the auxiliary problem
\begin{equation}\label{eq:vepsh}
-\nabla\cdot\Aa(\tfrac\cdot\e)\nabla v_{\e;h}\,=\,\nabla\cdot h\qquad\text{in $U$},
\end{equation}
we can write
\[\int_Uh\cdot\nabla g_\e^j\,=\,-\int_U\nabla v_{\e;h}\cdot\Aa(\tfrac\cdot\e)\nabla g_\e^j\,=\,-\lambda_\e^j\int_Uv_{\e;h}\,g_\e^j.\]
so that the above becomes
\begin{equation*}
\|u_{\e;h}^k\|_{\Ld^2(U)}^2\,=\,\sum_{j:j\ne k}\frac{(\lambda_\e^j)^2}{(\lambda_\e^j-\lambda_\e^k)^2}\Big(\int_Uv_{\e;h}\,g_\e^j\Big)^2.
\end{equation*}
In terms of $\delta_\e^k=(\lambda_\e^{k+1}-\lambda_\e^k)\wedge(\lambda_\e^k-\lambda_\e^{k-1})\wedge1$, using that $\{g_\e^j\}_j$ constitutes an orthonormal basis of $\Ld^2(U)$, we deduce
\begin{equation*}
\|u_{\e;h}^k\|_{\Ld^2(U)}^2\,\lesssim\,(\delta_\e^k)^{-2}\sum_{j}\Big(\int_Uv_{\e;h}g_\e^j\Big)^2\,=\,(\delta_\e^k)^{-2}\|v_{\e;h}\|_{\Ld^2(U)}^2.
\end{equation*}
and thus, by Poincaré's inequality combined with an energy estimate for~\eqref{eq:vepsh},
\begin{equation*}
\|u_{\e;h}^k\|_{\Ld^2(U)}\,\lesssim\,(\delta_\e^k)^{-1}\|h\|_{\Ld^2(U)}.
\end{equation*}
Inserting this into~\eqref{eq:bnd-nabz/z}, the conclusion~\eqref{eq:concl-zL2} follows.

\medskip
\step2 Proof of the Meyers-type improvement.\\
Rewriting equation~\eqref{eq:z} for~$u_{\e;h}^k$ as
\[-\triangle u_{\e;h}^k=\nabla\cdot\big((\tfrac2{1+\nu}\Aa(\tfrac\cdot\e)-\Id)\nabla u_{\e;h}^k\big)+\tfrac2{1+\nu}\big(\lambda_\e^ku_{\e;h}^k+(\Id-\pi_\e^k)[\nabla\cdot h]\big)\qquad\text{in $U$},\]
the standard $\Ld^p$-regularity theory for the Laplace equation yields for all $1<p<\infty$,
\begin{multline*}
\|\nabla u_{\e;h}^k\|_{\Ld^p(U)}\,\le\,K(p)\|(\tfrac2{1+\nu}\Aa(\tfrac\cdot\e)-\Id)\nabla u_{\e;h}^k\|_{\Ld^p(U)}\\
+2K(p)\lambda_\e^k\|u_{\e;h}^k\|_{W^{-1,p}(U)}+2K(p)\|(\Id-\pi_\e^k)[\nabla\cdot h]\|_{W^{-1,p}(U)},
\end{multline*}
where by interpolation the multiplicative constants satisfy
\begin{equation}\label{eq:conv-Kp}
\lim_{p\to2}K(p)= K(2)=1.
\end{equation}
The uniform ellipticity condition~\eqref{eq:ell-as} yields
\[|\tfrac2{1+\nu}\Aa(\tfrac\cdot\e)-\Id|\le\tfrac{1-\nu}{1+\nu},\]
and thus, further appealing to the Poincaré--Sobolev inequality, and inserting the definition of the projection $\pi_\e^k$, the above becomes for all~$2\le p<\infty$,
\begin{equation*}
\|\nabla u_{\e;h}^k\|_{\Ld^p(U)}\,\le\,K(p)\tfrac{1-\nu}{1+\nu}\|\nabla u_{\e;h}^k\|_{\Ld^p(U)}
+C_p\lambda_\e^k\|\nabla u_{\e;h}^k\|_{\Ld^2(U)}+C_p\|h\|_{\Ld^{p}(U)}+C_p\Big|\int_U\nabla g_\e^k\cdot h\Big|.
\end{equation*}
Using the deterministic estimates of Lemma~\ref{lem:lam-g-nabg} on $\lambda_\e^k,g_\e^k$, this yields
\begin{equation}\label{eq:pre-absorb-Meyers}
\|\nabla u_{\e;h}^k\|_{\Ld^p(U)}\,\le\,K(p)\tfrac{1-\nu}{1+\nu}\|\nabla u_{\e;h}^k\|_{\Ld^p(U)}
+C_{k,p}\|\nabla u_{\e;h}^k\|_{\Ld^2(U)}+C_{k,p}\|h\|_{\Ld^{p}(U)}.
\end{equation}
Recalling~\eqref{eq:conv-Kp} and $\tfrac{1-\nu}{1+\nu}<1$, we can choose $C_0\simeq1$ such that
\[K(p)\tfrac{1-\nu}{1+\nu}\,\le\,(\tfrac{1-\nu}{1+\nu})^{1/2}\,<\,1\qquad\text{provided $|p-2|\le\tfrac1{C_0}$}.\]
This allows to absorb the first right-hand side term in~\eqref{eq:pre-absorb-Meyers}: for all $2\le p\le2+\frac1{C_0}$,
\begin{equation*}
\|\nabla u_{\e;h}^k\|_{\Ld^p(U)}\,\lesssim_{k}\,\|\nabla u_{\e;h}^k\|_{\Ld^2(U)}+\|h\|_{\Ld^{p}(U)}.
\end{equation*}
Combined with~\eqref{eq:concl-zL2}, this yields the conclusion.
\end{proof}

\subsection{Convergence of eigenvalues and eigenfunctions}\label{sec:conv}
This section is devoted to the proof of Theorem~\ref{th:conv}.
We start with the following estimates on the fluctuation scaling of eigenvalues, which will be used to upgrade $\Ld^2(\Omega)$-estimates into corresponding $\Ld^q(\Omega)$-estimates.

\begin{lem}[Fluctuation scaling]\label{lem:CLT}
For all $k\ge1$ and $q<\infty$,
\begin{equation*}
\|\lambda_\e^k-\expecmm{\lambda_\e^k}\|_{\Ld^q(\Omega)}\,\lesssim_{k,q}\,\e^\frac d2.
\qedhere
\end{equation*}
\end{lem}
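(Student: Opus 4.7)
\emph{Strategy.} The plan is to apply the Malliavin--Poincar\'e inequality (Proposition~\ref{prop:Mall}) to $X=\lambda_\e^k$, which reduces the claim to the bound $\|D\lambda_\e^k\|_{\Ld^{2q}(\Omega;\Hf)}\lesssim_{k,q}\e^{d/2}$. For the Malliavin derivative, I would use the Hellmann--Feynman identity: starting from $\lambda_\e^k=\int_U\nabla g_\e^k\cdot\Aa(\tfrac\cdot\e)\nabla g_\e^k$, the contributions from $Dg_\e^k$ cancel after integration by parts against the eigenvalue relation, thanks to the normalization $\|g_\e^k\|_{\Ld^2(U)}=1$. Combining this with the chain rule applied to $\Aa=a_0(G)$, and viewing elements of $\Hf$ as $\R^\kappa$-valued distributions on $\R^d$, one obtains the explicit expression
\[(D\lambda_\e^k)_i(y)\,=\,\e^d\big(\nabla g_\e^k\cdot\partial_ia_0(G)\nabla g_\e^k\big)(\e y)\,\mathbf1_{\e y\in U},\]
the $\e^d$-prefactor stemming from the Dirac mass $\delta_{\cdot/\e}$ that appears when differentiating $\Aa(x/\e)=a_0(G(x/\e))$ and from the rescaling $x=\e y$.

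\emph{Execution.} Using the basic bound $\|\zeta\|_\Hf\lesssim\|[\zeta]_1\|_{\Ld^2(\R^d)}$ from~\eqref{eq:integr-cons} together with $|\partial_ia_0|\lesssim1$, the local average of $D\lambda_\e^k$ is controlled, after changing variables in the unit ball, by
\[[D\lambda_\e^k]_1(y)\,\lesssim\,\e^d\fint_{B(y)}|\nabla g_\e^k(\e z)|^2\mathbf1_{\e z\in U}\,dz\,\lesssim\,\e^d\,[\nabla g_\e^k]_{2;\e}^2(\e y)\,\mathbf1_{\dist(\e y,U)\le\e}.\]
Taking $\Ld^{2q}(\Omega)$, applying Minkowski's integral inequality to push the $\Ld^2(\R^d)$-norm outside the $\Ld^{2q}(\Omega)$-norm (legitimate since $2q\ge2$), and changing variables $x=\e y$ in the resulting spatial integral yield
\[\|D\lambda_\e^k\|_{\Ld^{2q}(\Omega;\Hf)}\,\lesssim\,\e^{d/2}\,\|[\nabla g_\e^k]_{2;\e}\|^2_{\Ld^4(U;\Ld^{4q}(\Omega))}\,\lesssim_{k,q}\,\e^{d/2},\]
where the final estimate is precisely the annealed $\Ld^p$-regularity bound~\eqref{eq:bnd-nabg} of Lemma~\ref{lem:lam-g-nabg} with $p=4$. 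Plugging this into Proposition~\ref{prop:Mall} and relabeling $q$ gives the conclusion.

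\emph{Main obstacle.} The conceptually delicate point is justifying the Hellmann--Feynman expression for $D\lambda_\e^k$: it is valid on the (open, full-measure, in the simple-spectrum case) event where $\lambda_\e^k$ is simple, whereas the lemma is stated for every $k\ge1$. To handle the possibly-degenerate case I would either (i) perturb $\Aa$ by an auxiliary smooth term to ensure simplicity, derive the bound, and pass to the limit using the deterministic Lipschitz estimate $|\lambda_\e^k[\Aa]-\lambda_\e^k[\Aa']|\lesssim_k\|\Aa-\Aa'\|_\infty$ coming from the Courant--Fischer min--max characterization; or (ii) work directly with the partial traces $\tau_\e^m:=\sum_{j\le m}\lambda_\e^j$, which are everywhere smooth functions of $\Aa$ and admit an analogous Hellmann--Feynman formula involving the spectral projector onto the bottom-$m$ eigenspace, then recover $\lambda_\e^k=\tau_\e^k-\tau_\e^{k-1}$. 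Either resolution makes the remainder of the argument routine annealed-regularity bookkeeping.
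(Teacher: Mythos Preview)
Your proposal is correct and follows essentially the same route as the paper: Malliavin--Poincar\'e inequality, the Hellmann--Feynman formula $D\lambda_\e^k=\int_U\nabla g_\e^k\cdot D\Aa(\tfrac\cdot\e)\nabla g_\e^k$ obtained by the same cancellation of the $Dg_\e^k$-terms, then~\eqref{eq:integr-cons} and the annealed gradient bound~\eqref{eq:bnd-nabg}. Your explicit treatment of the degenerate-eigenvalue case (via perturbation or partial traces) is a point the paper passes over in silence; otherwise the arguments coincide up to the ordering of the mixed norm in the final estimate.
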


\begin{proof}
In terms of Malliavin calculus, cf.~Proposition~\ref{prop:Mall}, centered moments can be estimated by
\begin{equation}\label{eq:Mall-mom-lambda}
\|\lambda_\e^k-\expecmm{\lambda_\e^k}\|_{\Ld^q(\Omega)}\,\lesssim_q\,\|D\lambda_\e^k\|_{\Ld^q(\Omega;\Hf)}.
\end{equation}
Starting from identity
\[\lambda_\e^k\,=\,\int_U\nabla g_\e^k\cdot\Aa(\tfrac\cdot\e)\nabla g_\e^k,\]
the Malliavin derivative can be written as
\[D\lambda_\e^k\,=\,\int_U\nabla g_\e^k\cdot D\Aa(\tfrac\cdot\e)\nabla g_\e^k+2\int_U\nabla D g_\e^k\cdot \Aa(\tfrac\cdot\e)\nabla g_\e^k.\]
Since the eigenvalue relation and the normalization of $g_\e^k$ ensure that the second right-hand side term is
\[\int_U\nabla D g_\e^k\cdot \Aa(\tfrac\cdot\e)\nabla g_\e^k\,=\,\lambda_\e^k\int_U g_\e^kD g_\e^k\,=\,\tfrac12\lambda_\e^kD\|g_\e^k\|_{\Ld^2(U)}\,=\,0,\]
we deduce
\begin{equation}\label{eq:form-Dlambda0}
D\lambda_\e^k\,=\,\int_U\nabla g_\e^k\cdot D\Aa(\tfrac\cdot\e)\nabla g_\e^k.
\end{equation}
The definition~\eqref{eq:def-A} of $\Aa$ yields
\begin{equation}\label{eq:Da-form}
D_z\Aa(\tfrac x\e)\,=\,\partial a_0(G(z))\,\delta(\tfrac x\e-z)\,=\,\e^d\partial a_0(G(z))\,\delta(x-\e z),
\end{equation}
so that~\eqref{eq:form-Dlambda0} becomes
\begin{equation*}
D\lambda_\e^k\,=\,\e^d\nabla g_\e^k(\e\cdot)\cdot \partial a_0(G(\cdot))\nabla g_\e^k(\e\cdot).
\end{equation*}
Inserting this into~\eqref{eq:Mall-mom-lambda}, and using the integrability condition~\eqref{eq:integr} in form of~\eqref{eq:integr-cons}, we obtain after rescaling,
\begin{equation}\label{eq:mom-lambda}
\|\lambda_\e^k-\expecmm{\lambda_\e^k}\|_{\Ld^q(\Omega)}\,\lesssim_q\,\e^{\frac d2}\,\|[\nabla g_\e^k]_{2;\e}\|_{\Ld^{2q}(\Omega;\Ld^4(U))}^2,
\end{equation}
and the conclusion follows from~\eqref{eq:bnd-nabg}.
\end{proof}

Next, we establish the following preliminary control on eigenvalues, which can be viewed as a version of~\cite[Theorem~2.2]{Kesavan-1} in the random setting (see also~\cite[Theorem~11.4]{JKO94}). The second right-hand side term in the estimate will be absorbed later for small $\e$.

\begin{lem}[Control on eigenvalues]\label{lem:conv-res}
For all $k\ge1$ and $q<\infty$,
\begin{equation*}
\|\lambda_\e^k-\bar\lambda^k\|_{\Ld^q(\Omega)}\,\lesssim_{k,q}\,\e\mu_d(\tfrac1\e)+\Big\|(\lambda_\e^k-\bar\lambda^k)\int_U|g_\e^k-\bar g^k|^2\Big\|_{\Ld^q(\Omega)},
\end{equation*}
where we recall that $\mu_d$ is given by~\eqref{eq:mudr}.
\end{lem}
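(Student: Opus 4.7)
The plan is to reduce the desired inequality to a bound on the interaction integral
\[I_\e \,:=\, \int_U \nabla g_\e^k\cdot(\Aa(\tfrac\cdot\e)-\bar\Aa)\nabla\bar g^k,\]
and then to show $\|I_\e\|_{\Ld^q(\Omega)}\lesssim_{k,q}\e\mu_d(\tfrac1\e)$ by a corrector-based two-scale computation.

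First I would test the rescaled eigenvalue equation~\eqref{eq:eigen-eps} against $\bar g^k\in H^1_0(U)$ and, symmetrically, the homogenized equation~\eqref{eq:eigen-bar} against $g_\e^k\in H^1_0(U)$. Using the symmetry of~$\Aa$ (and hence of~$\bar\Aa$) and subtracting the resulting weak forms yields the clean identity
\[(\lambda_\e^k-\bar\lambda^k)\int_U g_\e^k\bar g^k\,=\,I_\e.\]
Combined with the normalizations $\|g_\e^k\|_{\Ld^2(U)}=\|\bar g^k\|_{\Ld^2(U)}=1$ and the polarization $\int_U g_\e^k\bar g^k=1-\tfrac12\int_U|g_\e^k-\bar g^k|^2$, this rearranges to
\[\lambda_\e^k-\bar\lambda^k\,=\,I_\e+\tfrac12(\lambda_\e^k-\bar\lambda^k)\int_U|g_\e^k-\bar g^k|^2,\]
so the triangle inequality in $\Ld^q(\Omega)$ reduces the claim to the bound $\|I_\e\|_{\Ld^q(\Omega)}\lesssim_{k,q}\e\mu_d(\tfrac1\e)$.

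For the bound on $I_\e$, I would apply the corrector/flux-corrector identity from Lemma~\ref{lem:cor}, namely $(\Aa(\tfrac\cdot\e)-\bar\Aa)\ee_\alpha = q_\alpha(\tfrac\cdot\e) - \Aa(\tfrac\cdot\e)(\nabla\varphi_\alpha)(\tfrac\cdot\e)$, together with the rescalings $q_\alpha(\tfrac\cdot\e)=\e\,\nabla\cdot[\sigma_\alpha(\tfrac\cdot\e)]$ and $(\nabla\varphi_\alpha)(\tfrac\cdot\e)=\e\,\nabla[\varphi_\alpha(\tfrac\cdot\e)]$, to split $I_\e$ into a $\sigma$-piece and a $\varphi$-piece, each carrying an explicit prefactor~$\e$. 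One integration by parts in the $\sigma$-piece kills the spurious $\nabla^2 g_\e^k$ contribution via the skew-symmetry $\sigma_{\alpha mi}=-\sigma_{\alpha im}$ from~\eqref{eq:prop-sig} (symmetric Hessian paired with antisymmetric tensor), and also makes the corresponding boundary trace vanish identically: on $\partial U$ both $\nabla g_\e^k$ and $\nabla\bar g^k$ are purely normal, and $\sigma_{\alpha mi}n_m n_i=0$ by the same skew-symmetry. One integration by parts in the $\varphi$-piece, followed by substitution of $-\nabla\cdot[\Aa(\tfrac\cdot\e)\nabla g_\e^k]=\lambda_\e^k g_\e^k$, produces bulk terms of the form $\e\lambda_\e^k\int_U\varphi_\alpha(\tfrac\cdot\e)g_\e^k\partial_\alpha\bar g^k$ and $\e\int_U\varphi_\alpha(\tfrac\cdot\e)\Aa(\tfrac\cdot\e)\nabla g_\e^k\cdot\nabla\partial_\alpha\bar g^k$. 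All bulk pieces are then bounded in $\Ld^q(\Omega)$ by H\"older, using the $W^{2,p}(U)$-bound on $\bar g^k$ from Lemma~\ref{lem:barg}, the annealed (Meyers-improved) $\Ld^p$-regularity of $\nabla g_\e^k$ from Lemma~\ref{lem:lam-g-nabg}, and the sublinearity $\|[\varphi]_2(x)\|_{\Ld^q(\Omega)}+\|[\sigma]_2(x)\|_{\Ld^q(\Omega)}\lesssim_q\mu_d(|x|)$ from Theorem~\ref{th:cor}, yielding the target $O(\e\mu_d(\tfrac1\e))$.

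I expect the main obstacle to be the boundary contribution left over in the $\varphi$-piece, $\e\int_{\partial U}\varphi_\alpha(\tfrac\cdot\e)\Aa(\tfrac\cdot\e)\nabla g_\e^k\cdot n\,\partial_\alpha\bar g^k$, since no algebraic cancellation \`a la $\sigma$ is available and the trace of $\nabla g_\e^k$ on~$\partial U$ is not uniformly controlled -- this is the classical source of boundary layers in elliptic homogenization. To bypass this I would insert, prior to integration by parts, a smooth cutoff $\eta_\e$ equal to~$1$ outside a tubular neighborhood of $\partial U$ of width $\sim\e\mu_d(\tfrac1\e)$ and vanishing on $\partial U$, with $|\nabla\eta_\e|\lesssim(\e\mu_d(\tfrac1\e))^{-1}$, replace $\varphi_\alpha(\tfrac\cdot\e)$ by $\eta_\e\varphi_\alpha(\tfrac\cdot\e)$, and estimate the resulting layer-supported commutator term by H\"older, combining the Meyers improvement of Lemma~\ref{lem:lam-g-nabg} (slightly supercritical $\Ld^p$-integrability of $\nabla g_\e^k$) with the corrector control of Theorem~\ref{th:cor}, so that the small measure of the layer absorbs the $|\nabla\eta_\e|$ factor and still yields exactly $O(\e\mu_d(\tfrac1\e))$.
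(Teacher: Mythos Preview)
Your strategy coincides with the paper's: both derive the identity
\[(\lambda_\e^k-\bar\lambda^k)\int_Ug_\e^k\bar g^k\,=\,\e\int_U\nabla g_\e^k\cdot(\Aa\varphi_\alpha-\sigma_\alpha)(\tfrac\cdot\e)\nabla\partial_\alpha\bar g^k-\e\lambda_\e^k\int_U\varphi_\alpha(\tfrac\cdot\e)g_\e^k\partial_\alpha\bar g^k\]
via the corrector/flux-corrector decomposition and the skew-symmetry cancellation in the~$\sigma$-piece, then combine with the polarization~\eqref{eq:ggnorm} and estimate the right-hand side using Theorem~\ref{th:cor}, Lemma~\ref{lem:lam-g-nabg}, and Lemma~\ref{lem:barg}. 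The only difference is organizational: the paper reaches the identity by testing the eigenvalue relation for~$g_\e^k$ directly against the two-scale function $\bar g^k+\e\varphi_\alpha(\tfrac\cdot\e)\partial_\alpha\bar g^k$, whereas you start from~$I_\e$ and integrate by parts in the $\varphi$-piece. You are right that this produces a boundary contribution, since $\varphi_\alpha(\tfrac\cdot\e)\partial_\alpha\bar g^k\notin H^1_0(U)$; the paper's argument applies the weak eigenvalue relation to this non-admissible test function and thus tacitly drops the same term, so your treatment is in fact the more careful one.

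One caveat on your cutoff fix: relying only on the Meyers improvement (slightly supercritical~$p$) does not recover the claimed rate. After inserting~$\eta_\e$, the layer remainder contains the piece
\[\int_{\partial_\rho U}(1-\eta_\e)\,(\nabla\varphi_\alpha)(\tfrac\cdot\e)\cdot\Aa(\tfrac\cdot\e)\nabla g_\e^k\,\partial_\alpha\bar g^k,\]
which carries \emph{no} explicit factor of~$\e$; you must gain the full~$\rho\sim\e\mu_d(\tfrac1\e)$ from the small measure of~$\partial_\rho U$. With Meyers exponent~$p$ close to~$2$ on~$\nabla g_\e^k$ and on~$\nabla\varphi$, H\"older only yields~$O(\rho^{1-1/p})=O((\e\mu_d(\tfrac1\e))^{1/2+\delta})$. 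To approach the target you should instead invoke the full annealed $\Ld^p$-bound~\eqref{eq:bnd-nabg} for large~$p$ (not its Meyers upgrade), which gives $\|[\nabla g_\e^k]_{2;\e}\|_{\Ld^p(U;\Ld^q(\Omega))}\lesssim_{p,q}1$ for all~$p<\infty$ and hence $\|\nabla g_\e^k\|_{\Ld^q(\Omega;\Ld^2(\partial_\rho U))}\lesssim_\delta\rho^{1/2-\delta}$; combined with the analogous bound on~$(\nabla\varphi)(\tfrac\cdot\e)$ this yields~$O((\e\mu_d(\tfrac1\e))^{1-\delta})$ for any~$\delta>0$, which is essentially optimal but still carries an arbitrarily small loss.
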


\begin{proof}
Expanding the gradient, inserting the definition of the flux corrector~$\sigma$, cf.~\eqref{eq:prop-sig}, integrating by parts, using Leibniz' rule, and using the skew-symmetry of $\sigma$, we can write 
\begin{eqnarray}
\lefteqn{\int_U\nabla g_\e^k\cdot\Aa(\tfrac\cdot\e)\nabla\big(\bar g^k+\e\varphi_\alpha(\tfrac\cdot\e)\partial_\alpha\bar g^k\big)}\nonumber\\
&=&\int_U\nabla g_\e^k\cdot\Aa(\tfrac\cdot\e)(\nabla\varphi_\alpha+\ee_\alpha)(\tfrac\cdot\e)\partial_\alpha\bar g^k
+\e\int_U\nabla g_\e^k\cdot(\Aa\varphi_\alpha)(\tfrac\cdot\e)\nabla\partial_\alpha\bar g^k\nonumber\\
&=&\int_U\nabla g_\e^k\cdot\bar\Aa\nabla\bar g^k+\int_U\nabla g_\e^k\cdot(\nabla\cdot\sigma_\alpha)(\tfrac\cdot\e)\partial_\alpha\bar g^k
+\e\int_U\nabla g_\e^k\cdot(\Aa\varphi_\alpha)(\tfrac\cdot\e)\nabla\partial_\alpha\bar g^k\nonumber\\
&=&\int_U\nabla g_\e^k\cdot\bar\Aa\nabla\bar g^k+\e\int_U\nabla g_\e^k\cdot(\Aa\varphi_\alpha-\sigma_\alpha)(\tfrac\cdot\e)\nabla\partial_\alpha\bar g^k,\label{eq:comp-sig}
\end{eqnarray}
and thus, using the eigenvalue relation for $\bar g^k$ to reformulate the first right-hand side term, we get
\begin{equation*}
\int_U\nabla g_\e^k\cdot\Aa(\tfrac\cdot\e)\nabla\big(\bar g^k+\e\varphi_\alpha(\tfrac\cdot\e)\partial_\alpha\bar g^k\big)
\,=\,\bar\lambda_k\int_U g_\e^k\,\bar g^k+\e\int_U\nabla g_\e^k\cdot(\Aa\varphi_\alpha-\sigma_\alpha)(\tfrac\cdot\e)\nabla\partial_\alpha\bar g^k.
\end{equation*}
Since the eigenvalue relation for $g_\e^k$ allows to rewrite the left-hand side as
\begin{equation*}
\int_U\nabla g_\e^k\cdot\Aa(\tfrac\cdot\e)\nabla\big(\bar g^k+\e\varphi_\alpha(\tfrac\cdot\e)\partial_\alpha\bar g^k\big)
\,=\,\lambda_\e^k\int_U g_\e^k \big(\bar g^k+\e\varphi_\alpha(\tfrac\cdot\e)\partial_\alpha\bar g^k\big),
\end{equation*}
we are led to the following identity,
\begin{equation*}
(\lambda_\e^k-\bar\lambda^k)\int_Ug_\e^k\,\bar g^k=\e\int_U\nabla g_\e^k\cdot(\Aa\varphi_\alpha-\sigma_\alpha)(\tfrac\cdot\e)\nabla\partial_\alpha\bar g^k-\e\lambda_\e^k\int_U  \varphi_\alpha(\tfrac\cdot\e) g_\e^k \partial_\alpha\bar g^k.
\end{equation*}
As $g_\e^k$ and $\bar g^k$ are normalized, we note that
\begin{equation}\label{eq:ggnorm}
\int_Ug_\e^k\,\bar g^k\,=\,1-\frac12\int_U|g_\e^k-\bar g^k|^2,
\end{equation}
so that the above can be rewritten as
\begin{multline*}
\lambda_\e^k-\bar\lambda^k\,=\,\e\int_U\nabla g_\e^k\cdot(\Aa\varphi_\alpha-\sigma_\alpha)(\tfrac\cdot\e)\nabla\partial_\alpha\bar g^k-\e\lambda_\e^k\int_U  \varphi_\alpha(\tfrac\cdot\e) g_\e^k \partial_\alpha\bar g^k\\
+\frac12(\lambda_\e^k-\bar\lambda^k)\int_U|g_\e^k-\bar g^k|^2.
\end{multline*}
Taking the $\Ld^q(\Omega)$-norm of both sides, using~\eqref{eq:bnd-lambda}--\eqref{eq:bnd-g}, and using Hölder's inequality, we find for all $p>2$ and $q\ge1$,
\begin{multline*}
\|\lambda_\e^k-\bar\lambda^k\|_{\Ld^q(\Omega)}\,\lesssim_k\,\e\|[\nabla g_\e^k]_{p;\e}\|_{\Ld^{2q}(\Omega;\Ld^p(U))}\|[(\varphi,\sigma)]_{p}\|_{\Ld^{2q}(\Omega;\Ld^p(\frac1\e U))}\|\nabla^2\bar g^k\|_{\Ld^{\frac{p}{p-2}}(U)}\\
+\e\|\varphi\|_{\Ld^q(\Omega;\Ld^2(\frac1\e U))}
+\Big\|(\lambda_\e^k-\bar\lambda^k)\int_U|g_\e^k-\bar g^k|^2\Big\|_{\Ld^q(\Omega)}.
\end{multline*}
Choosing $p>2$ close enough to $2$, the conclusion follows from the corrector estimates of Theorem~\ref{th:cor} and from the estimates of Lemmas~\ref{lem:lam-g-nabg} and~\ref{lem:barg} on $g_\e^k,\bar g^k$.
\end{proof}

The following preliminary estimate provides a control on the convergence of eigenfunctions and on their two-scale expansion error in terms of the convergence of neighboring eigenvalues.

\begin{lem}[Control on eigenfunctions]\label{lem:conv-res2}
For all $k\ge1$ such that $\bar\lambda^k$ is simple, we have for all $q<\infty$ and $\delta>0$,
\begin{multline}\label{eq:contr-eigen}
\|g_\e^k-\bar g^k\|_{\Ld^q(\Omega;\Ld^2(U))}+\|\nabla g_\e^k-(\nabla\varphi_\alpha+\ee_\alpha)(\tfrac\cdot\e)\partial_\alpha\bar g^k\|_{\Ld^q(\Omega;\Ld^2(U))}\\
\,\lesssim_{k,q,\delta}\,(\e\mu_d(\tfrac1\e))^\frac12+\sum_{j=k-1}^{k+1}\|\lambda_\e^j-\bar\lambda^j\|_{\Ld^{q+\delta}(\Omega)}.
\end{multline}
In addition, the following Meyers-type improvement holds: there exists a constant $C_0\simeq1$ such that for all~$2\le p\le2+\frac1{C_0}$ the $\Ld^2(U)$-norms can be replaced by $\Ld^p(U)$-norms in~\eqref{eq:contr-eigen}, at the price of replacing the rate $(\e\mu_d(\tfrac1\e))^{1/2}$ by $(\e\mu_d(\tfrac1\e))^{1/p}$.
 \end{lem}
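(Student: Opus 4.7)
The strategy is a boundary-truncated two-scale expansion combined with spectral projection onto the $k$th eigenspace. Choose a smooth cut-off $\eta_\e\in C^\infty_c(U)$ equal to $1$ on $\{x\in U:\dist(x,\partial U)\ge 2w_\e\}$ and vanishing on $\{x\in U:\dist(x,\partial U)\le w_\e\}$, with $|\nabla\eta_\e|\lesssim w_\e^{-1}$ and boundary-strip width $w_\e\simeq\e\mu_d(\tfrac1\e)$, and set
\[r_\e^k\,:=\,g_\e^k-\bar g^k-\e\,\eta_\e\,\varphi_\alpha(\tfrac\cdot\e)\,\partial_\alpha\bar g^k\,\in\,H^1_0(U).\]
Combining the eigenvalue relations for $g_\e^k$ and $\bar g^k$ with the flux-corrector identity~\eqref{eq:comp-sig} (used pointwise via the skew-symmetry~\eqref{eq:prop-sig}), one derives
\[(-\lambda_\e^k-\nabla\cdot\Aa(\tfrac\cdot\e)\nabla)\,r_\e^k\,=\,(\lambda_\e^k-\bar\lambda^k)\bar g^k+\e\lambda_\e^k\eta_\e\varphi_\alpha(\tfrac\cdot\e)\partial_\alpha\bar g^k+\nabla\cdot h_\e^k,\]
where $h_\e^k$ gathers the interior two-scale defect $\e(\Aa\varphi_\alpha-\sigma_\alpha)(\tfrac\cdot\e)\eta_\e\nabla\partial_\alpha\bar g^k$ and a boundary-layer piece $\e(\Aa\varphi_\alpha-\sigma_\alpha)(\tfrac\cdot\e)(\nabla\eta_\e)\partial_\alpha\bar g^k$ supported in the $w_\e$-strip. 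A trace-type estimate on this strip, combined with the corrector moments of Theorem~\ref{th:cor}, controls $\|h_\e^k\|_{\Ld^q(\Omega;\Ld^2(U))}$ by $(\e\mu_d(\tfrac1\e))^{1/2}$.

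Next, decompose $r_\e^k=\pi_\e^k[r_\e^k]+(\Id-\pi_\e^k)[r_\e^k]$. Using $\|g_\e^k\|_{\Ld^2(U)}=\|\bar g^k\|_{\Ld^2(U)}=1$, the parallel component is computed as
\[\int_U r_\e^k\,g_\e^k\,=\,\tfrac12\|g_\e^k-\bar g^k\|_{\Ld^2(U)}^2-\e\int_U\eta_\e\,\varphi_\alpha(\tfrac\cdot\e)\,g_\e^k\,\partial_\alpha\bar g^k,\]
which is of order $\e\mu_d(\tfrac1\e)$ after moments, modulo a quadratic self-improving term to be absorbed. For the orthogonal component, the two non-divergence-form terms on the right-hand side are rewritten in divergence form: $(\lambda_\e^k-\bar\lambda^k)\bar g^k=-\tfrac{\lambda_\e^k-\bar\lambda^k}{\bar\lambda^k}\nabla\cdot(\bar\Aa\nabla\bar g^k)$, and $\lambda_\e^k\e\eta_\e\varphi_\alpha(\tfrac\cdot\e)\partial_\alpha\bar g^k$ is written as $\nabla\cdot H$ by solving an auxiliary Poisson problem on $U$ with Dirichlet boundary condition. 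Lemma~\ref{lem:prelim} then yields
\[\|\nabla(\Id-\pi_\e^k)[r_\e^k]\|_{\Ld^2(U)}\,\lesssim_k\,(\delta_\e^k)^{-1}\big((\e\mu_d(\tfrac1\e))^{1/2}+\|\lambda_\e^k-\bar\lambda^k\|\big),\]
and Poincar\'e's inequality upgrades this to an $\Ld^2(U)$-norm bound on $(\Id-\pi_\e^k)[r_\e^k]$. Adding back the corrector correction produces the desired control on both $g_\e^k-\bar g^k$ and $\nabla g_\e^k-(\nabla\varphi_\alpha+\ee_\alpha)(\tfrac\cdot\e)\partial_\alpha\bar g^k$, the residual boundary-layer discrepancy $(1-\eta_\e)(\nabla\varphi_\alpha)(\tfrac\cdot\e)\partial_\alpha\bar g^k$ being also of order $(\e\mu_d(\tfrac1\e))^{1/2}$ by Theorem~\ref{th:cor}.

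Converting these pathwise bounds into $\Ld^q(\Omega)$-norms requires handling the random prefactor $(\delta_\e^k)^{-1}$. By simplicity of $\bar\lambda^k$, the deterministic gap $\bar\delta_k:=\tfrac12\min\{\bar\lambda^{k+1}-\bar\lambda^k,\bar\lambda^k-\bar\lambda^{k-1},1\}$ is strictly positive, and the triangle inequality gives $\delta_\e^k\ge\bar\delta_k$ on the event $E_\e^k:=\{\max_{j\in\{k-1,k,k+1\}}|\lambda_\e^j-\bar\lambda^j|\le\bar\delta_k\}$. Splitting the $\Ld^q(\Omega)$-expectation along $E_\e^k$ and its complement, and applying H\"older's inequality at a slightly larger exponent $q+\delta$ together with Markov's inequality on the $(q+\delta)$-th moments of the eigenvalue errors, produces the sum $\sum_{j=k-1}^{k+1}\|\lambda_\e^j-\bar\lambda^j\|_{\Ld^{q+\delta}(\Omega)}$ in~\eqref{eq:contr-eigen}. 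The Meyers-type improvement follows from the $\Ld^p$ analogues of Lemma~\ref{lem:prelim} and Theorem~\ref{th:cor}, the boundary-layer term having $\Ld^p(U)$-norm of order $(\e\mu_d(\tfrac1\e))^{1/p}$ since the $w_\e$-strip has volume $\simeq w_\e$. The main obstacle is precisely this spectral-gap control: Lemma~\ref{lem:prelim} degenerates exactly where eigenvalues cluster, and the probabilistic input from Lemmas~\ref{lem:conv-res} and~\ref{lem:CLT} is needed to close the argument at the price of the small integrability loss $\delta$.
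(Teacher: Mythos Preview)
Your strategy coincides with the paper's: a boundary-truncated two-scale expansion, the PDE for the remainder, and Lemma~\ref{lem:prelim} for the inversion. Two points of comparison are worth noting.

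First, your formula for $h_\e^k$ is incomplete: the derivation of the equation for $r_\e^k$ also produces the term $(1-\eta_\e)(\Aa-\bar\Aa)(\tfrac\cdot\e)\nabla\bar g^k$, which is of order~$1$ pointwise (not $O(\e)$) but supported in the strip, hence of $\Ld^2(U)$-norm $\simeq w_\e^{1/2}$; see the paper's~\eqref{eq:def-h-abbr}. With your choice $w_\e\simeq\e\mu_d(\tfrac1\e)$ this term is again $O((\e\mu_d(\tfrac1\e))^{1/2})$, so your final bound is unaffected, but this term is precisely the one that fixes the optimal strip width when balanced against $\e(\Aa\varphi_\alpha-\sigma_\alpha)(\tfrac\cdot\e)(\nabla\eta_\e)\partial_\alpha\bar g^k$; the paper keeps the width $\rho$ free and optimizes $\rho^{1/2}+\e\mu_d(\tfrac1\e)\rho^{-1/2}$ at the end.

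Second, your treatment of the random gap $(\delta_\e^k)^{-1}$ via event-splitting and Markov is a legitimate alternative (and with H\"older exponent $q(q+\delta)/\delta$ on the a priori bound together with Markov at exponent $q+\delta$, it does yield the linear sum $\sum_j\|\lambda_\e^j-\bar\lambda^j\|_{\Ld^{q+\delta}(\Omega)}$). The paper instead uses the pathwise algebraic inequality $\delta_\e^k\ge\bar\delta_k-\sum_{j}|\lambda_\e^j-\bar\lambda^j|$ to obtain
\[\|\nabla w_{\e,\rho}^k\|_{\Ld^2(U)}\,\lesssim_k\,B+\|\nabla w_{\e,\rho}^k\|_{\Ld^2(U)}\sum_{j=k-1}^{k+1}|\lambda_\e^j-\bar\lambda^j|,\]
with $B$ collecting the good terms, and then applies H\"older in $\Omega$ to the product using the trivial a priori moment bound on $\|\nabla w_{\e,\rho}^k\|_{\Ld^2(U)}$. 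This avoids the event decomposition and is slightly cleaner. Conversely, the paper applies Lemma~\ref{lem:prelim} directly to the full $w_{\e,\rho}^k$ without isolating $\pi_\e^k[w_{\e,\rho}^k]$, so your explicit parallel/orthogonal split is more careful on that front --- though neither argument fully spells out how the quadratic term $\tfrac12\|g_\e^k-\bar g^k\|_{\Ld^2(U)}^2$ in the parallel component is ultimately absorbed.
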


\begin{proof}
We start with the proof of~\eqref{eq:contr-eigen}.
For a parameter $\rho\in[\e,1]$ to be later optimized (depending on $\e$), set
\[U_\rho:=\{x\in U:\dist(x,\partial U)>\rho\},\qquad\partial_\rho U:=U\setminus U_\rho,\]
and choose a cut-off function $\eta_\rho\in C^\infty_c(U)$ such that
\[\eta_\rho|_{U_\rho}=1,\qquad0\le \eta_\rho\le1,\qquad |\nabla\eta_\rho|\lesssim\tfrac1\rho.\]
Now consider the following truncated two-scale expansion error for the eigenvector $g_\e^k$,
\[w_{\e,\rho}^k\,:=\,g_\e^k-\bar g^k-\e\eta_\rho\varphi_\alpha(\tfrac\cdot\e)\partial_\alpha\bar g^k.\]
The eigenvalue relations for $g_\e^k$ and $\bar g^k$ yield
\[-\nabla\cdot\Aa(\tfrac\cdot\e)\nabla w_{\e,\rho}^k\,=\,\lambda_\e^kg_\e^k-\bar\lambda^k\bar g^k+\nabla\cdot\big((\Aa-\bar\Aa)(\tfrac\cdot\e)\nabla\bar g^k+\Aa(\tfrac\cdot\e)\nabla(\e\eta_\rho\varphi_\alpha(\tfrac\cdot\e)\partial_\alpha\bar g^k)\big),\]
and thus, expanding the gradient in the last right-hand side term and using the skew-symmetric flux corrector~$\sigma$, cf.~\eqref{eq:prop-sig}, we are easily led to
\begin{eqnarray*}
-\nabla\cdot\Aa(\tfrac\cdot\e)\nabla w_{\e,\rho}^k\,=\,
\lambda_\e^kg_\e^k-\bar\lambda^k\bar g^k+\nabla\cdot h_{\e,\rho}^k,
\end{eqnarray*}
in terms of
\begin{equation}\label{eq:def-h-abbr}
h_{\e,\rho}^k\,:=\,(1-\eta_\rho)(\Aa-\bar\Aa)(\tfrac\cdot\e)\nabla\bar g^k+\e(\Aa\varphi_\alpha-\sigma_\alpha)(\tfrac\cdot\e)\nabla(\eta_\rho\partial_\alpha\bar g^k).
\end{equation}
Rewriting this equation as
\begin{equation}\label{eq:wek-eqn}
(-\lambda_\e^k-\nabla\cdot\Aa(\tfrac\cdot\e)\nabla)w_{\e,\rho}^k\,=\,(\lambda_\e^k-\bar\lambda^k)\bar g^k+\e\lambda_\e^k\eta_\rho\varphi_\alpha(\tfrac\cdot\e)\partial_\alpha\bar g^k+\nabla\cdot h_{\e,\rho}^k,
\end{equation}
and appealing to Lemma~\ref{lem:prelim}, together with Poincaré's inequality, we deduce
\[\|\nabla w_{\e,\rho}^k\|_{\Ld^2(U)}\,\lesssim_k\,(\delta_\e^k)^{-1}\Big(|\lambda_\e^k-\bar\lambda^k|+\e\lambda_\e^k\|\varphi_\alpha(\tfrac\cdot\e)\partial_\alpha\bar g^k\|_{\Ld^2(U)}+\|h_{\e,\rho}^k\|_{\Ld^2(U)}\Big).\]
Recalling the definition of $\delta_\e^k=(\lambda_\e^{k+1}-\lambda_\e^k)\wedge(\lambda_\e^k-\lambda_\e^{k-1})\wedge1$, we note that
\[\delta_\e^k \,\ge\, (\bar\lambda^{k+1}-\bar\lambda^k)\wedge(\bar\lambda^{k}-\bar\lambda^{k-1})\wedge1-\sum_{j=k-1}^{k+1}|\lambda_\e^j-\bar\lambda^j|.\]
As $\bar\lambda^k$ is simple, the first right-hand side term is a positive deterministic constant only depending on $d,k,\bar\Aa,U$,
so that the above estimate on $\nabla w_{\e,\rho}^k$ entails
\begin{multline*}
\|\nabla w_{\e,\rho}^k\|_{\Ld^2(U)}\,\lesssim_k\,|\lambda_\e^k-\bar\lambda^k|+\e\lambda_\e^k\|\varphi_\alpha(\tfrac\cdot\e)\partial_\alpha\bar g^k\|_{\Ld^2(U)}+\|h_{\e,\rho}^k\|_{\Ld^2(U)}\\
+\|\nabla w_{\e,\rho}^k\|_{\Ld^2(U)}\sum_{j=k-1}^{k+1}|\lambda_\e^j-\bar\lambda^j|.
\end{multline*}
Taking the $\Ld^q(\Omega)$-norm of both sides, inserting the definition~\eqref{eq:def-h-abbr} of $h_{\e,\rho}^k$, using that $1-\eta_\rho$ and $\nabla\eta_\rho$ are supported in $\partial_\rho U$, which has volume $|\partial_\rho U|\simeq\rho$, and appealing to the corrector estimates of Theorem~\ref{th:cor} and to the estimates of Lemmas~\ref{lem:lam-g-nabg} and~\ref{lem:barg} on $\lambda_\e^k,g_\e^k,\bar g^k$, we easily deduce for all~$q<\infty$ and $\delta>0$,
\begin{equation*}
\|\nabla w_{\e,\rho}^k\|_{\Ld^q(\Omega;\Ld^2(U))}\,\lesssim_{k,q,\delta}\,\rho^\frac12+\e\mu_d(\tfrac1\e)\rho^{-\frac12}
+\sum_{j=k-1}^{k+1}\|\lambda_\e^j-\bar\lambda^j\|_{\Ld^{q+\delta}(\Omega)}.
\end{equation*}
On the one hand, by Poincaré's inequality, using again the corrector estimates of Theorem~\ref{th:cor}, this implies
\begin{eqnarray*}
\|g_\e^k-\bar g^k\|_{\Ld^q(\Omega;\Ld^2(U))}&\lesssim_{k,q}&\e\mu_d(\tfrac1\e)+\|w_{\e,\rho}^k\|_{\Ld^q(\Omega;\Ld^2(U))}\\
&\lesssim_{k,q,\delta}&\rho^\frac12+\e\mu_d(\tfrac1\e)\rho^{-\frac12}
+\sum_{j=k-1}^{k+1}\|\lambda_\e^j-\bar\lambda^j\|_{\Ld^{q+\delta}(\Omega)}.
\end{eqnarray*}
On the other hand, decomposing
\[\nabla w_{\e,\rho}^k\,=\,\nabla g_\e^k-(\nabla\varphi_\alpha+\ee_\alpha)(\tfrac\cdot\e)\partial_\alpha\bar g^k+(1-\eta_\rho)\nabla\varphi_\alpha(\tfrac\cdot\e)\partial_\alpha\bar g^k-\e\varphi_\alpha(\tfrac\cdot\e)\nabla(\eta_\rho\partial_\alpha\bar g^k),\]
we similarly deduce
\begin{equation*}
\|\nabla g_\e^k-(\nabla\varphi_\alpha+\ee_\alpha)(\tfrac\cdot\e)\partial_\alpha\bar g^k\|_{\Ld^q(\Omega;\Ld^2(U))}\,\lesssim_{k,q,\delta}\,\rho^\frac12+\e\mu_d(\tfrac1\e)\rho^{-\frac12}
+\sum_{j=k-1}^{k+1}\|\lambda_\e^j-\bar\lambda^j\|_{\Ld^{q+\delta}(\Omega)}.
\end{equation*}
Now optimizing with respect to $\rho\in[\e,1]$ in these last two estimates, which amounts to choosing $\rho=\e\mu_d(\frac1\e)$, the conclusion~\eqref{eq:contr-eigen} follows.

\medskip\noindent
Finally, the proof of the Meyers-type improvement follows the same line as the above argument, rather starting from the corresponding improvement of Lemma~\ref{lem:prelim} applied to equation~\eqref{eq:wek-eqn}. We skip the details.
\end{proof}

Combining the above different lemmas, we may now conclude the proof of Theorem~\ref{th:conv} by a buckling argument.

\begin{proof}[Proof of Theorem~\ref{th:conv}]
Appealing to Lemma~\ref{lem:CLT} in form of
\begin{eqnarray*}
\|\lambda_\e^k-\bar\lambda^k\|_{\Ld^q(\Omega)}&\le&\|\lambda_\e^k-\expecmm{\lambda_\e^k}\|_{\Ld^q(\Omega)}+|\expecmm{\lambda_\e^k}-\bar\lambda^k|\\
&\lesssim_{k,q}&\e^\frac d2+\|\lambda_\e^k-\bar\lambda^k\|_{\Ld^1(\Omega)},
\end{eqnarray*}
and combining this with Lemma~\ref{lem:conv-res} in $\Ld^1(\Omega)$, we find for all $q<\infty$,
\[\|\lambda_\e^k-\bar\lambda^k\|_{\Ld^q(\Omega)}\,\lesssim_{k,q}\,\e^\frac d2+\e\mu_d(\tfrac1\e)+\Big\|(\lambda_\e^k-\bar\lambda^k)\int_U|g_\e^k-\bar g^k|^2\Big\|_{\Ld^1(\Omega)},\]
and thus, by Hölder's inequality,
\begin{equation}\label{eq:bef-absorb}
\|\lambda_\e^k-\bar\lambda^k\|_{\Ld^q(\Omega)}\,\lesssim_{k,q}\,\e^\frac d2+\e\mu_d(\tfrac1\e)+\|\lambda_\e^k-\bar\lambda^k\|_{\Ld^2(\Omega)}\|g_\e^k-\bar g^k\|_{\Ld^4(\Omega;\Ld^2(U))}^2.
\end{equation}
As it is already known that $g_\e^k\to\bar g^k$ in $\Ld^2(U)$ almost surely as $\e\downarrow0$, the dominated convergence theorem entails $\|g_\e^k-\bar g^k\|_{\Ld^4(\Omega;\Ld^2(U))}\to0$. This qualitative convergence result allows to absorb the last right-hand side term in~\eqref{eq:bef-absorb}, and we deduce the following suboptimal estimate,
\begin{equation}\label{eq:lamb-conv-sub}
\|\lambda_\e^k-\bar\lambda^k\|_{\Ld^q(\Omega)}\,\lesssim_{k,q}\,\e^\frac d2+\e\mu_d(\tfrac1\e).
\end{equation}
Next, inserting this into the result~\eqref{eq:contr-eigen} of Lemma~\ref{lem:conv-res2}, and noting that $\e^{\frac d2}\lesssim(\e\mu_d(\tfrac1\e))^\frac12$, we deduce the optimal convergence rate for eigenfunctions,
\begin{equation*}
\|g_\e^k-\bar g^k\|_{\Ld^q(\Omega;\Ld^2(U))}+\|\nabla g_\e^k-(\nabla\varphi_\alpha+\ee_\alpha)(\tfrac\cdot\e)\partial_\alpha\bar g^k\|_{\Ld^q(\Omega;\Ld^2(U))}\,\lesssim_{k,q,\delta}\,(\e\mu_d(\tfrac1\e))^\frac12.
\end{equation*}
Finally, inserting this bound into the result of Lemma~\ref{lem:conv-res}, we are led to the optimal estimate for eigenvalues,
\[\|\lambda_\e^k-\bar\lambda^k\|_{\Ld^q(\Omega)}\,\lesssim_q\,\e\mu_d(\tfrac1\e)+\|g_\e^k-\bar g^k\|_{\Ld^{2q}(\Omega;\Ld^2(U))}^2\,\lesssim\,\e\mu_d(\tfrac1\e),\]
and the conclusion follows.
\end{proof}

\subsection{Eigenvalue fluctuations}\label{sec:fluct}
This section is devoted to the proof of Theorem~\ref{th:fluct}.
While the fluctuation scaling is already captured in Lemma~\ref{lem:CLT}, we now turn to the characterization of leading-order fluctuations and to their pathwise description~\eqref{eq:pathwise}.

\begin{proof}[Proof of Theorem~\ref{th:fluct}]
The eigenvalue relations for $g_\e^k$ and $\bar g^k$ yield
\[(\lambda_\e^k-\bar\lambda^k)\int_Ug_\e^k\,\bar g^k\,=\,\int_U\nabla\bar g^k\cdot(\Aa(\tfrac\cdot\e)-\bar\Aa)\nabla g_\e^k,\]
which can be rewritten as follows, in view of~\eqref{eq:ggnorm},
\begin{equation}\label{eq:fluct-path}
\lambda_\e^k-\bar\lambda^k\,=\,\int_U\nabla\bar g^k\cdot(\Aa(\tfrac\cdot\e)-\bar\Aa)\nabla g_\e^k\,+\,\frac12(\lambda_\e^k-\bar\lambda^k)\int_U|g_\e^k-\bar g^k|^2,
\end{equation}
where the first right-hand side term involves the so-called homogenization commutator of the eigenfunction $g_\e^k$, in the terminology of~\cite{DGO1}.
Taking inspiration from the fluctuation theory in~\cite{DGO1}, we then replace the homogenization commutator by its two-scale expansion, and we are led to postulating the following approximation,
\[\lambda_\e^k-\expecmm{\lambda_\e^k}\,\sim\,\int_U\Xi^\circ_{\alpha\beta}(\tfrac\cdot\e)\partial_\alpha\bar g^k\partial_\beta\bar g^k,\]
where we recall the definition of the standard homogenization commutator, cf.~\eqref{eq:commut},
\begin{equation*}
\Xi_{\alpha\beta}^\circ\,=\,\ee_\beta\cdot(\Aa-\bar\Aa)(\nabla\varphi_\alpha+\ee_\alpha).
\end{equation*}
It remains to estimate the approximation error. For that purpose, in view of~\eqref{eq:fluct-path}, we can write for all~$q<\infty$,
\begin{equation*}
\Big\|\lambda_\e^k-\expecmm{\lambda_\e^k}-\int_U\Xi^\circ_{\alpha\beta}(\tfrac\cdot\e)\partial_\alpha\bar g^k\partial_\beta\bar g^k\Big\|_{\Ld^q(\Omega)}
\,=\,\|E_\e^k-\expecmm{E_\e^k}\|_{\Ld^q(\Omega)},
\end{equation*}
where we have set for abbreviation
\begin{equation}\label{eq:def-E}
E_\e^k\,:=\,\int_U\nabla\bar g^k\cdot(\Aa(\tfrac\cdot\e)-\bar\Aa)\big(\nabla g_\e^k-(\nabla\varphi_\alpha+\ee_\alpha)(\tfrac\cdot\e)\partial_\alpha\bar g^k\big)
+\frac12(\lambda_\e^k-\bar\lambda^k)\int_U|g_\e^k-\bar g^k|^2.
\end{equation}
Appealing to Malliavin calculus, cf.~Proposition~\ref{prop:Mall}, we get
\begin{equation}\label{eq:err-norm-2sc-exp}
\Big\|\lambda_\e^k-\expecmm{\lambda_\e^k}-\int_U\Xi^\circ_{\alpha\beta}(\tfrac\cdot\e)\partial_\alpha\bar g^k\partial_\beta\bar g^k\Big\|_{\Ld^q(\Omega)}\,\lesssim_q\,\|DE_\e^k\|_{\Ld^q(\Omega;\Hf)}.
\end{equation}
To estimate the right-hand side, we first proceed to a suitable computation of the Malliavin derivative $DE_\e^k$, and we split the proof into two steps.

\medskip
\step1 Proof of
\begin{multline}\label{eq:DE}
DE_\e^k\,=\,\int_U
\big(\nabla g_\e^k+(\nabla\phi_\beta+\ee_\beta)(\tfrac\cdot\e)\partial_\beta\bar g^k\big)
\cdot D\Aa(\tfrac\cdot\e)\big(\nabla g_\e^k-(\nabla\varphi_\alpha+\ee_\alpha)(\tfrac\cdot\e)\partial_\alpha\bar g^k\big)\\
-\e\int_U\nabla(\partial_\alpha\bar g^k\partial_\beta\bar g^k)\cdot\big((\Aa\varphi_\beta+\sigma_\beta)(\tfrac\cdot\e)\nabla D\varphi_\alpha(\tfrac\cdot\e)+ (\varphi_\beta D\Aa)(\tfrac\cdot\e)(\nabla\varphi_\alpha+\ee_\alpha)(\tfrac\cdot\e)\big).
\end{multline}
By definition~\eqref{eq:def-E} of $E_\e^k$, its Malliavin derivative can be decomposed as
\begin{multline}\label{eq:decomp-DE}
DE_\e^k\,=\,\int_U\nabla\bar g^k\cdot D\Aa(\tfrac\cdot\e)\big(\nabla g_\e^k-(\nabla\varphi_\alpha+\ee_\alpha)(\tfrac\cdot\e)\partial_\alpha\bar g^k\big)\\
+\int_U\nabla\bar g^k\cdot(\Aa(\tfrac\cdot\e)-\bar\Aa)\big(\nabla Dg_\e^k-\nabla D\varphi_\alpha(\tfrac\cdot\e)\partial_\alpha\bar g^k\big)
+D\bigg(\frac12(\lambda_\e^k-\bar\lambda^k)\int_U|g_\e^k-\bar g^k|^2\bigg).
\end{multline}
We start by reformulating the last right-hand side term.
As the normalization of the eigenfunction~$g_\e^k$ entails
\[\int_Ug_\e^kDg_\e^k\,=\,\tfrac12D\|g_\e^k\|_{\Ld^2(U)}^2\,=\,0,\]
we can write
\begin{eqnarray}
\lefteqn{D\bigg(\frac12(\lambda_\e^k-\bar\lambda^k)\int_U|g_\e^k-\bar g^k|^2\bigg)}\nonumber\\
&=&(\lambda_\e^k-\bar\lambda^k)\int_U(g_\e^k-\bar g^k)Dg_\e^k+\frac12(D\lambda_\e^k)\int_U|g_\e^k-\bar g^k|^2\nonumber\\
&=&-(\lambda_\e^k-\bar\lambda^k)\int_U\bar g^kDg_\e^k+\frac12(D\lambda_\e^k)\int_U|g_\e^k-\bar g^k|^2.\label{eq:Dlllgg}
\end{eqnarray}
We further reformulate the first right-hand side term in this identity.
Taking the Malliavin derivative of the eigenvalue relation for $g_\e^k$ yields
\[(-\lambda_\e^k-\nabla\cdot\Aa(\tfrac\cdot\e)\nabla) Dg_\e^k\,=\,\nabla\cdot D\Aa(\tfrac\cdot\e)\nabla g_\e^k+(D\lambda_\e^k)g_\e^k,\]
which can be rewritten as follows, after applying $\Id-\pi_\e^k$ to both sides,
\[(-\lambda_\e^k-\nabla\cdot\Aa(\tfrac\cdot\e)\nabla)Dg_\e^k\,=\,(\Id-\pi_\e^k)[\nabla\cdot D\Aa(\tfrac\cdot\e)\nabla g_\e^k].\]
Testing this relation with $\bar g^k$, and using the eigenvalue relation for $\bar g^k$, we get
\begin{equation*}
(\lambda_\e^k-\bar\lambda^k)\int_U\bar g^kDg_\e^k\,=\,\int_U\nabla\bar g^k\cdot(\Aa(\tfrac\cdot\e)-\bar\Aa)\nabla Dg_\e^k+\int_U\nabla(1-\pi_\e^k)[\bar g^k]\cdot D\Aa(\tfrac\cdot\e)\nabla g_\e^k.
\end{equation*}
Combining this with~\eqref{eq:decomp-DE} and~\eqref{eq:Dlllgg}, we find after straightforward simplifications,
\begin{multline}\label{eq:DE-v0}
DE_\e^k\,=\,\int_U\nabla\bar g^k\cdot D\Aa(\tfrac\cdot\e)\big(\nabla g_\e^k-(\nabla\varphi_\alpha+\ee_\alpha)(\tfrac\cdot\e)\partial_\alpha\bar g^k\big)
+\frac12(D\lambda_\e^k)\int_U|g_\e^k-\bar g^k|^2\\
-\int_U(\partial_\alpha\bar g^k)\nabla\bar g^k\cdot(\Aa(\tfrac\cdot\e)-\bar\Aa)\nabla D\varphi_\alpha(\tfrac\cdot\e)
-\int_U\nabla(1-\pi_\e^k)[\bar g^k]\cdot D\Aa(\tfrac\cdot\e)\nabla g_\e^k.
\end{multline}
Next, we further reformulate the last two right-hand side terms in this identity, and we start with the first one. In terms of the skew-symmetric flux corrector~$\sigma$, cf.~\eqref{eq:prop-sig}, using Leibniz' rule, and noting that the Malliavin derivative of the corrector equation yields
\begin{equation}\label{eq:Dphi}
-\nabla\cdot\Aa\nabla D\varphi_\beta=\nabla\cdot D\Aa(\nabla\varphi_\beta+\ee_\beta),
\end{equation}
we easily get
\begin{eqnarray}
\lefteqn{\int_U(\partial_\alpha\bar g^k)\nabla\bar g^k\cdot(\Aa(\tfrac\cdot\e)-\bar\Aa)\nabla D\varphi_\alpha(\tfrac\cdot\e)}\nonumber\\
&=&\int_U(\partial_\alpha\bar g^k\partial_\beta\bar g^k)\,(-\Aa\nabla\varphi_\beta+\nabla\cdot\sigma_\beta)(\tfrac\cdot\e)\cdot\nabla D\varphi_\alpha(\tfrac\cdot\e)\nonumber\\
&=&\int_U(\partial_\alpha\bar g^k\partial_\beta\bar g^k)\,(\nabla\varphi_\beta)(\tfrac\cdot\e)\cdot D\Aa(\tfrac\cdot\e)(\nabla\varphi_\alpha+\ee_\alpha)(\tfrac\cdot\e)\nonumber\\
&&+\,\e\int_U\nabla(\partial_\alpha\bar g^k\partial_\beta\bar g^k)\cdot(\Aa\varphi_\beta+\sigma_\beta)(\tfrac\cdot\e)\nabla D\varphi_\alpha(\tfrac\cdot\e)\nonumber\\
&&+\,\e\int_U\nabla(\partial_\alpha\bar g^k\partial_\beta\bar g^k)\cdot (\varphi_\beta D\Aa)(\tfrac\cdot\e)(\nabla\varphi_\alpha+\ee_\alpha)(\tfrac\cdot\e).
\label{eq:DE-v0/0}
\end{eqnarray}
We turn to the reformulation of the last right-hand side term in~\eqref{eq:DE-v0}. Inserting the definition of the projection $\pi_\e^k$, and using identity~\eqref{eq:ggnorm}, we find
\begin{eqnarray*}
\lefteqn{\int_U\nabla(1-\pi_\e^k)[\bar g^k]\cdot D\Aa(\tfrac\cdot\e)\nabla g_\e^k}\\
&=&\int_U\nabla\bar g^k\cdot D\Aa(\tfrac\cdot\e)\nabla g_\e^k
-\Big(\int_Ug_\e^k\,\bar g^k\Big)\int_U\nabla g_\e^k\cdot D\Aa(\tfrac\cdot\e)\nabla g_\e^k\\
&=&-\int_U(\nabla g_\e^k-\nabla\bar g^k)\cdot D\Aa(\tfrac\cdot\e)\nabla g_\e^k
+\frac12\Big(\int_U|g_\e^k-\bar g^k|^2\Big)\int_U\nabla g_\e^k\cdot D\Aa(\tfrac\cdot\e)\nabla g_\e^k,
\end{eqnarray*}
or alternatively, further recalling the formula~\eqref{eq:form-Dlambda0} for the Malliavin derivative of eigenvalues,
\begin{equation*}
\int_U\nabla(1-\pi_\e^k)[\bar g^k]\cdot D\Aa(\tfrac\cdot\e)\nabla g_\e^k
\,=\,-\int_U(\nabla g_\e^k-\nabla\bar g^k)\cdot D\Aa(\tfrac\cdot\e)\nabla g_\e^k
+\frac12(D\lambda_\e^k)\int_U|g_\e^k-\bar g^k|^2.
\end{equation*}
Inserting this together with~\eqref{eq:DE-v0/0} into~\eqref{eq:DE-v0}, and reorganizing the terms, the claim~\eqref{eq:DE} follows.

\medskip
\step2 Conclusion.\\
In terms of the solution $v_{\e;\alpha}\in H^1_0(U)$ of the auxiliary problem
\begin{equation}\label{eq:veps}
-\nabla\cdot\Aa(\tfrac\cdot\e)\nabla v_{\e;\alpha}\,=\,\nabla\cdot\big((\Aa\varphi_\beta-\sigma_\beta)(\tfrac\cdot\e)\nabla(\partial_\alpha\bar g^k\partial_\beta\bar g^k)\big),\qquad\text{in $U$},
\end{equation}
we can write
\begin{equation*}
\int_U\nabla(\partial_\alpha\bar g^k\partial_\beta\bar g^k)\cdot(\Aa\varphi_\beta+\sigma_\beta)(\tfrac\cdot\e)\nabla D\varphi_\alpha(\tfrac\cdot\e)
\,=\,-\int_U\nabla v_{\e;\alpha}\cdot\Aa(\tfrac\cdot\e)\nabla D\varphi_\alpha(\tfrac\cdot\e),
\end{equation*}
and thus, in view of the Malliavin derivative of the corrector equation, cf.~\eqref{eq:Dphi},
\begin{equation*}
\int_U\nabla(\partial_\alpha\bar g^k\partial_\beta\bar g^k)\cdot(\Aa\varphi_\beta+\sigma_\beta)(\tfrac\cdot\e)\nabla D\varphi_\alpha(\tfrac\cdot\e)
\,=\,\int_U\nabla v_{\e;\alpha}\cdot D\Aa(\tfrac\cdot\e)(\nabla\varphi_\alpha(\tfrac\cdot\e)+\ee_\alpha).
\end{equation*}
Inserting this into~\eqref{eq:DE}, and recalling that the definition~\eqref{eq:def-A} of $\Aa$ yields~\eqref{eq:Da-form}, we are led to
\begin{multline*}
DE_\e^k\,=\,\e^d\big(\nabla g_\e^k(\e\cdot)+(\nabla\phi_\beta+\ee_\beta)\partial_\beta\bar g^k(\e\cdot)\big)
\cdot \partial a_0(G)\big(\nabla g_\e^k(\e\cdot)-(\nabla\varphi_\alpha+\ee_\alpha)\partial_\alpha\bar g^k(\e\cdot)\big)\\
-\e^{d+1}\big(\nabla v_{\e;\alpha}(\e\cdot)+\varphi_\beta\nabla(\partial_\alpha\bar g^k\partial_\beta\bar g^k)(\e\cdot)\big)\cdot \partial a_0(G)(\nabla\varphi_\alpha+\ee_\alpha).
\end{multline*}
Inserting this into~\eqref{eq:err-norm-2sc-exp}, and using the integrability condition~\eqref{eq:integr} in form of~\eqref{eq:integr-cons}, we obtain after rescaling, for all $2< p,q<\infty$,
\begin{eqnarray*}
\lefteqn{\Big\|\lambda_\e^k-\expecmm{\lambda_\e^k}-\int_U\Xi^\circ_{\alpha\beta}(\tfrac\cdot\e)\partial_\alpha\bar g^k\partial_\beta\bar g^k\Big\|_{\Ld^q(\Omega)}}\\
&\lesssim_q&\e^\frac d2\big\|\big[\nabla g_\e^k+(\nabla\varphi_\beta+\ee_\beta)(\tfrac\cdot\e)\partial_\beta\bar g^k\big]_{2;\e}\big\|_{\Ld^{2q}(\Omega;\Ld^\frac{2p}{p-2}(U))}\\
&&\hspace{3cm}\times\big\|\big[\nabla g_\e^k-(\nabla\varphi_\alpha+\ee_\alpha)(\tfrac\cdot\e)\partial_\alpha\bar g^k\big]_{2;\e}\big\|_{\Ld^{2q}(\Omega;\Ld^p(U))}\\
&&+\,\e^{1+\frac d2}\|[\nabla\varphi_\alpha+\ee_\alpha]_2\|_{\Ld^{2q}(\Omega)}\Big(\|[\nabla v_{\e;\alpha}]_{2;\e}\|_{\Ld^2(U;\Ld^{2q}(\Omega))}\\
&&\hspace{6cm}+\big\|\big[\varphi_\beta(\tfrac\cdot\e)\nabla(\partial_\alpha\bar g^k\partial_\beta\bar g^k)\big]_{2;\e}\big\|_{\Ld^2(U;\Ld^{2q}(\Omega))}\Big).
\end{eqnarray*}
Applying the annealed $\Ld^p$-regularity of Theorem~\ref{th:ann-reg} to equation~\eqref{eq:veps}, in form of
\[\|[\nabla v_{\e;\alpha}]_{2;\e}\|_{\Ld^2(U;\Ld^{2q}(\Omega))}\,\lesssim_{p,q}\,\big\|\big[(\Aa\varphi_\beta-\sigma_\beta)(\tfrac\cdot\e)\nabla(\partial_\alpha\bar g^k\partial_\beta\bar g^k)\big]_{2;\e}\big\|_{\Ld^2(U;\Ld^{3q}(\Omega))},\]
and appealing to the corrector estimates of Theorem~\ref{th:cor} and to the estimates of Lemmas~\ref{lem:lam-g-nabg} and~\ref{lem:barg} on $\lambda_\e^k,g_\e^k,\bar g^k$, we get for all $2<p,q<\infty$,
\begin{multline*}
\e^{-\frac d2}\,\Big\|\lambda_\e^k-\expecmm{\lambda_\e^k}-\int_U\Xi^\circ_{\alpha\beta}(\tfrac\cdot\e)\partial_\alpha\bar g^k\partial_\beta\bar g^k\Big\|_{\Ld^q(\Omega)}\,\lesssim_{k,p,q}\,\e\mu_d(\tfrac1\e)\\
+\big\|\big[\nabla g_\e^k-(\nabla\varphi_\alpha+\ee_\alpha)(\tfrac\cdot\e)\partial_\alpha\bar g^k\big]_{2;\e}\big\|_{\Ld^{2q}(\Omega;\Ld^p(U))}.
\end{multline*}
The conclusion~\eqref{eq:pathwise} follows from the Meyers-type result in Lemma~\ref{lem:conv-res2} provided that~$p>2$ is chosen close enough to $2$.
\end{proof}

\section*{Acknowledgements}
We thank Antoine Gloria and Christopher Shirley for motivating discussions on the topic, and we acknowledge financial support from the CNRS-Momentum program.

\bibliographystyle{plain}
\bibliography{biblio}

\end{document}